\newcommand{\norm}[1]{\|{#1}\|_2}
\newcommand{\normf}[1]{\|{#1}\|_F}
\newcommand{\norms}[1]{\|{#1}\|}
\newcommand{\abs}[1]{\lvert#1\rvert}
\newcommand{\argmin}[1]{\mathop{\rm argmin}\limits_{#1}}
\newcommand{\argmax}[1]{\mathop{\rm argmax}\limits_{#1}}
\newcommand{\minm}[1]{\mathop{\rm min}\limits_{#1}}
\newcommand{\maxm}[1]{\mathop{\rm max}\limits_{#1}}
\newcommand{\barrho}{\bar{\rho}_{i,\alpha}}
\newcommand{\E}{{\mathbb E}}
\newcommand{\PP}{{\mathbb P}}
\newcommand{\1}{{\mathds 1}}
\newcommand{\R}{{\mathbb R}}
\newcommand{\Rnr}{{\mathbb R}^{n \times r}}
\newcommand{\T}{\top}
\newcommand{\Xb}{{\bar{X}}}
\newcommand{\Hh}{{\hat{H}}}
\newcommand{\hh}{{\hat{h}}}
\newcommand{\Ht}{{\tilde{H}}}
\newcommand{\htt}{{\tilde{h}}}
\newcommand{\rank}{{\rm rank}}
\newcommand{\tr}{{\rm tr}}
\newcommand{\ex}{{\rm ex}}
\newtheorem{definition}{Definition}[section]
\newtheorem{corollary}[definition]{Corollary}
\newtheorem{prop}[definition]{Proposition}
\newtheorem{theorem}[definition]{Theorem}
\newtheorem{lemma}[definition]{Lemma}
\newtheorem{claim}[definition]{Claim}
\newtheorem{remark}[definition]{Remark}
\newtheorem{example}[definition]{Example}
\date{}
\begin{document}
\baselineskip 18pt
\bibliographystyle{plain}
\title[Solving Systems of Quadratic Equations]{Solving Systems of Quadratic Equations via Exponential-type Gradient Descent Algorithm}

\author{Meng Huang}
\address{LSEC, Inst.~Comp.~Math., Academy of
Mathematics and System Science,  Chinese Academy of Sciences, Beijing, 100091, China \newline
School of Mathematical Sciences, University of Chinese Academy of Sciences, Beijing 100049, China}
\email{hm@lsec.cc.ac.cn}

\author{Zhiqiang Xu}
\thanks{
      Zhiqiang Xu was supported  by NSFC grant ( 91630203, 11422113,  11331012) and by National Basic Research Program of China (973 Program 2015CB856000).}
\address{LSEC, Inst.~Comp.~Math., Academy of
Mathematics and System Science,  Chinese Academy of Sciences, Beijing, 100091, China \newline
School of Mathematical Sciences, University of Chinese Academy of Sciences, Beijing 100049, China}
\email{xuzq@lsec.cc.ac.cn}

\begin{abstract}
We consider the rank minimization problem from quadratic measurements, i.e., recovering a rank $r$ matrix $X \in \Rnr$ from $m$ scalar measurements $y_i=a_i^\T XX^\T a_i,\;a_i\in \R^n,\;i=1,\ldots,m$. Such problem arises in a variety of applications such as quadratic regression and quantum state tomography. We present a novel algorithm, which is termed {\em exponential-type gradient descent algorithm}, to minimize a non-convex objective function $f(U)=\frac{1}{4m}\sum_{i=1}^m(y_i-a_i^\T UU^\T a_i)^2$. This  algorithm  starts with a careful initialization, and then refines this initial guess by iteratively applying exponential-type gradient descent. Particularly,  we can obtain a good initial guess of $X$
as long as the number of Gaussian random measurements is  $O(nr)$, and  our iteration algorithm can converge linearly to the true $X$ (up to an orthogonal matrix) with $m=O\left(nr\log (cr)\right)$ Gaussian random measurements.
\end{abstract}
\maketitle
\section{Introduction}
\subsection{Problem setup.}
Let $X \in \Rnr$ be a fixed and unknown matrix with ${\rm rank}(X)=r$, and our aim is to recover $X$ from given quadratic measurements, i.e.,
\begin{equation} \label{question}
{\rm find }\quad X\in \R^{n\times r}, \quad {\rm s.t.}\quad
y_i=a_i^\T XX^\T a_i=\norm{a_i^\T X}^2, \qquad i=1,\ldots,m,
\end{equation}
where $a_i=(a_{i,1},\ldots,a_{i,n})\in \R^n$.
This problem is raised in  many emerging applications of science and engineering,
such as  covariance sketching, quantum state tomography and high dimensional data streams   \cite{rankone3,kueng2017low, rankone2}. A simple observation is that $a_i^\T XX^\T a_i=a_i^\T XOO^\T X^\T a_i$ where $O\in \R^{r\times r}$ is an orthogonal matrix. We can only hope to recover $X$ up to a right orthogonal matrix.
There exists an orthogonal matrix $O^*\in \R^{r\times r}$ such that $XO^*$ has orthogonal column vectors.
Hence, throughout the paper we  can assume that $X$ has orthogonal column vectors.

  To recover $X$ from given measurements (\ref{question}),
we consider the following optimization problem:
\begin{equation} \label{optimization problem}
 \minm{U\in \Rnr}  f(U)=\frac{1}{4m}\sum_{i=1}^m(y_i-\norm{a_i^\T U}^2)^2.
\end{equation}
 The aim of this paper is to develop algorithms to solve (\ref{optimization problem}).

 \subsection{Related work}
\subsubsection{Low rank matrix recovery}
 Rank minimization problem is a direct generalization of compressed sensing \cite{recht2010guaranteed,jain2013low}.
 For the general rank minimization problem, it aims to reconstruct a low rank matrix $Q\in \R^{n\times n}$  from incomplete measurements, which can be formulated as the following programming
\begin{equation}\label{eq:minrank}
\begin{array}{l}
 \mathop{\min} \limits_{Z \in \R^{n\times n}} \qquad \rank (Z) \\
  \text{subject to} \quad \tr(A_iZ)=y_i,\quad  i=1,\ldots, m,
\end{array}
\end{equation}
where $y_i=\tr(A_iQ), A_i\in \R^{n\times n}, i=1,\ldots,m$. In \cite{Xulowrank}, Xu has proved that in order to guarantee the solution of (\ref{eq:minrank}) is $Q$ where $Q\in {\mathbb C}^{n\times n}$ and ${\rm rank}(Q)\leq r$,  the minimal measurement number $m$ is $4nr-4r^2$. Since (\ref{eq:minrank}) is non-convex, it is  challenging to solve it \cite{meka2008rank}. However, under a certain restricted isometry property (RIP), this problem can be relaxed to a nuclear norm minimization problem  which is a convex programming and can be solved efficiently  \cite{candes2011tight,recht2010guaranteed}.

Noting that $M:=XX^\T$ is a low rank matrix, we can recast (\ref{question}) as a rank minimization problem. This means that we can use the nuclear norm minimization to recover the matrix $M$ and hence $X$:
\begin{equation}\label{eq:rankone}
\begin{array}{l}
 \mathop{\min} \limits_{Z \in {\mathcal H}_n} \qquad \|Z\|_* \\
  \text{subject to} \quad \tr(A_iZ)=y_i,\quad  i=1,\ldots, m,
\end{array}
\end{equation}
where ${\mathcal H}_n:=\{Q\in \R^{n\times n}:Q=Q^\T\}$ and $A_i=a_ia_i^*$.
The (\ref{eq:rankone}) was studied in \cite{kueng2017low, rankone3}
with proving  that $m\ge Cnr$ Gaussian measurements are sufficient to recover the unknown matrix $M=XX^\T$ exactly. In \cite{rauhut2016low}, Rauhut and Terstiege also consider the case where the measurement vectors $a_i, i=1,\ldots,m$ are from a tight frame.

\subsubsection{Phase retrieval}
Under the setting of $r=1$, the  (\ref{question}) is reduced to phase retrieval problem.
Phase retrieval is to recover an unknown vector from the magnitude of measurements, which means to recover a signal $x\in \mathbb{H}^n$ from measurements
\begin{equation}
 y_i=\abs{\langle a_i,x\rangle}^2, \quad i=1,\ldots,m,
\end{equation}
where $ a_i\in \mathbb{H}^n$ $(\mathbb{H}=\mathbb{C}$ or $\mathbb{R})$ are sampling vectors. This problem is raised  in many imaging applications due to the limitations of optical sensors which can only record intensity information, such as X-ray crystallography \cite{harrison1993phase,millane1990phase}, astronomy \cite{fienup1987phase}, diffraction imaging \cite{shechtman2015phase,gerchberg1972practical}. It has been proved  that $m\ge 4n-4$ Gaussian measurements are sufficient to  recover the unknown vector up to a global phase \cite{phase1}. In recent years, there are several different algorithms have been proposed to solve it \cite{balan2012reconstruction,matrixcompletion,demanet2014stable,eldar2014phase,netrapalli2013phase}.
 In \cite{WF}, Cand\`es et al. design Wirtinger flow algorithm for phase retrieval with solving the following non-convex optimization problem
\begin{equation}\label{eq:gaoxu}
\minm{u\in \mathbb{C}^n} \frac{1}{4m}\sum\limits_{i=1}^m(y_i-\abs{a_i^{*}u}^2)^2
\end{equation}
and prove that the algorithm  converges to the true signal up to a global phase with high probability provided the measurement vectors are  $m=O(n\log n)$ Gaussian measurements.   Following the work of \cite{WF}, Chen and Cand\`es \cite{TWF} propose a modified gradient method which is called {\em Truncated Wirtinger Flow}, and it removes the additional logarithmic factor in the number of measurements $m$. In \cite{Gaoxu}, Gao and Xu propose a Gauss-Newton algorithm to solve (\ref{eq:gaoxu}) and they prove that, for the real signal,  the algorithm can converge to the global optimal solution quadratically with $O(n\log n)$ measurements.

\subsection{Our contribution}
In \cite{thelocal,zheng2015convergent}, one designed algorithms for solving (\ref{optimization problem}). In order to guarantee convergence to the global optimal solution, the algorithm in
\cite{thelocal} requires that $m\ge C\normf{X}^8\lambda_r^{-4}nr^2\log^2n$, while the algorithm in \cite{zheng2015convergent}
needs $m=O(r^3\kappa^2n\log n)$, where $\kappa$ denotes the condition number of $XX^\top$.
 In contrast to those algorithms, we aim to reduce the sampling complexity with removing the additional logarithmic factor on $n$. In this paper, we propose a novel algorithm and call it {\em exponential-type gradient descent algorithm}. For initialization, we give a tighter initial guess through a careful truncated skill; and for iteration update step, we add a moderate bounded exponential-type function to the classical gradient. Particularly, we show the followings all hold with high probability:
\begin{itemize}
\item We present a spectral initial method which obtains a good initial guess provided $m\ge C\sigma_r^{-2}\normf{X}^4nr$ and $a_i, i=1,\ldots,m $ are Gaussian random vectors, where $\sigma_r,\sigma_1$ are the smallest and the largest nonzero eigenvalues of the positive semidefinite matrix $XX^\T $
\item Starting from our initial guess, we refine the initial estimation by iteratively applying a novel gradient update rule. If $m \ge C\sigma_r^{-2}\normf{X}^4nr\log(cr\normf{X}^2/\sigma_r) $, then our algorithm  linearly converges to a global minimizer $X$, up to a right orthogonal matrix. More importantly, the step size in our algorithm is independent with the dimension $n$.
\end{itemize}

\subsection{Organization}
The paper is organized as follows. First, we introduce some  notations and lemmas in Section 2. In Section 3, we introduce the exponential-type gradient descent algorithm  for solving (\ref{optimization problem}).
We study the convergence property of the new algorithm in Section 4. In Section 5, we introduce the main idea for proving the results which given in  Section 4. Numerical experiments are made  in Section 6.
At last, most of the detailed proofs are given in the Appendix.

\section{Preliminaries}
\subsection{Notations}
Throughout the paper, we assume that  $ X =(x_1,\ldots, x_r) \in \Rnr $ has orthogonal columns. Without loss of generality, we  assume that $ \norm{x_1} \geq \norm{x_2} \geq \cdots \geq \norm{x_r}$. We use the Gaussian  random vectors $ a_i\in\R^n, \, i=1,\ldots, m $ as the measurement vectors and obtain $y_i=a_i^\T XX^\T a_i, \, i=1,\ldots,m$. Here we say the sampling vectors are the Gaussian random measurements if $a_i\in {\mathbb R}^n$ are i.i.d. $ \mathcal{N}(0,I) $ random variables.  As we have the entire manifold solutions given by $\mathcal{X}:=\{XO:O\in \mathcal{O}(r)\}$, where $\mathcal{O}(r)$ is the set of $r\times r$ orthogonal matrices, we  define the distance between a matrix $U\in \Rnr$ and $X$ as
\begin{equation}\label{distance}
d(U)\,\,:=\,\,\mathop{\min} \limits_{O \in \mathcal{O}(r)} \|XO-U\|_F.
\end{equation}
To state conveniently, we assume that
\begin{equation} \label{eigen}
\sigma_1 \geq \sigma_2\geq \cdots \geq \sigma_r >0
\end{equation}
are the nonzero eigenvalues of the  matrix $XX^\T $.
\subsection{Lemmas}
We now introduce some lemmas which will be used in our paper. First, we recall a result about random matrix with non-isotropic sub-gaussian rows \cite[Equation (5.26)]{vershynin2010introduction}.
\begin{lemma} (\cite[Equation (5.26)]{vershynin2010introduction}) \label{A introduction}
Let $A$ be an $N\times n$ matrix whose rows are $A_i$, and assume that $\Sigma^{-1/2}A_i$ are isotropic sub-gaussian random vectors, and let $K$ be the maximum of their sub-gaussian norms. Then for every $t\ge 0$, the following inequality holds with probability at least $1-2\exp(-ct^2)$:
\[ \norm{\frac{1}{N}A^*A-\Sigma}\le \max(\delta,\delta^2)\norm{\Sigma} \qquad \text{where} \quad \delta=C\sqrt{\frac{n}{N}}+\frac{t}{\sqrt{N}}.
\]
Here $C,c$ are constants.
\end{lemma}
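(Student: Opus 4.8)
The plan is to reduce the non-isotropic statement to the isotropic case and then run the standard covering-net plus Bernstein argument. First I would set $B_i := \Sigma^{-1/2} A_i$, so that by hypothesis the $B_i$ are independent isotropic sub-gaussian vectors with $\|B_i\|_{\psi_2} \le K$. Letting $B$ be the $N\times n$ matrix with rows $B_i^\T$, the relation $A_i = \Sigma^{1/2} B_i$ gives $A^* A = \Sigma^{1/2} (B^* B) \Sigma^{1/2}$, whence
\[
\norm{\tfrac{1}{N} A^*A - \Sigma} = \norm{\Sigma^{1/2}\Big(\tfrac{1}{N} B^*B - I\Big)\Sigma^{1/2}} \le \norm{\Sigma}\,\norm{\tfrac{1}{N} B^*B - I},
\]
using $\|\Sigma^{1/2}\|^2=\norm{\Sigma}$. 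So it suffices to establish the isotropic bound $\norm{\tfrac1N B^*B - I} \le \max(\delta,\delta^2)$ with the stated probability.

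Next I would control the operator norm of the symmetric matrix $M := \tfrac1N B^*B - I$ through its quadratic form on a net. Writing $\norm{M} = \sup_{x\in S^{n-1}} |\innerp{Mx,x}|$ and fixing a $\tfrac14$-net $\mathcal N$ of $S^{n-1}$ with $|\mathcal N| \le 9^{n}$, the usual comparison yields $\norm{M} \le 2\max_{x\in\mathcal N} |\innerp{Mx,x}|$. For a fixed unit vector $x$ one has
\[
\innerp{Mx,x} = \frac{1}{N} \sum_{i=1}^N \big(\innerp{B_i,x}^2 - 1\big),
\]
a normalized sum of independent, centered random variables. Since each $\innerp{B_i,x}$ is sub-gaussian with $\psi_2$-norm at most $K$, each summand $\innerp{B_i,x}^2-1$ is a centered sub-exponential variable with $\psi_1$-norm $O(K^2)$.

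The heart of the argument is Bernstein's inequality for independent sub-exponential variables, which gives, for every $\epsilon>0$,
\[
\PP\Big(|\innerp{Mx,x}| \ge \tfrac{\epsilon}{2}\Big) \le 2\exp\big(-c' N \min(\epsilon^2/K^4,\; \epsilon/K^2)\big).
\]
A union bound over $\mathcal N$ costs a factor $9^{n}=e^{n\log 9}$, so I would take $\epsilon = \max(\delta,\delta^2)$ with $\delta = C\sqrt{n/N} + t/\sqrt N$; the $C\sqrt{n/N}$ term is calibrated precisely so that $c'N\min(\epsilon^2,\epsilon)$ exceeds $n\log 9 + c t^2$, leaving the residual probability $2\exp(-ct^2)$ after the union bound. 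Undoing the net comparison and then the isotropic reduction completes the proof.

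The main obstacle is the bookkeeping in this last step. One must track how the two regimes of Bernstein's inequality split according to whether $\delta \le 1$ or $\delta > 1$; this dichotomy between the $\epsilon^2$ and $\epsilon$ branches is exactly what forces $\max(\delta,\delta^2)$ rather than $\delta$ to appear in the conclusion. Choosing the absolute constant $C$ large enough that the Bernstein exponent dominates the net entropy $n\log 9$ uniformly in $N$ and $t$, and arranging both regimes to collapse into the single clean bound $\max(\delta,\delta^2)\norm{\Sigma}$, is the only genuinely delicate point; the reductions and the per-point concentration are otherwise routine.
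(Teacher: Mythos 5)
The paper does not prove this lemma at all --- it is imported verbatim from Vershynin's notes (Equation (5.26) there), so there is no internal proof to compare against. Your argument --- whitening via $B_i=\Sigma^{-1/2}A_i$ together with $\|\Sigma^{1/2}M\Sigma^{1/2}\|\le\|\Sigma\|\,\|M\|$, then a $1/4$-net of cardinality $9^n$, Bernstein's inequality for the centered sub-exponential variables $\langle B_i,x\rangle^2-1$, and the calibration $\epsilon=\max(\delta,\delta^2)$ so that $\min(\epsilon,\epsilon^2)=\delta^2$ absorbs both the net entropy $n\log 9$ and the tail $t^2$ --- is precisely the standard proof of this result (Theorem 5.39 and Remark 5.40 in the cited reference), and it is correct, with the understood caveat that the constants $C,c$ you obtain depend on the sub-gaussian norm $K$, exactly as in the cited statement.
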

The next result is Bernstein-type inequality about sub-exponential random variables \cite[Proposition 5.26]{vershynin2010introduction}.
\begin{lemma} (\cite[Proposition 5.26]{vershynin2010introduction}) \label{Bernstein inequality}
Let $X_1,\ldots,X_N$ be independent centered sub-exponential random variables and $K=\max_i\|X_i\|_{\psi_1}$. Then for every $a=(a_1,\ldots,a_N)\in \R^N$ and every $t\ge 0$, we have
\[
\mathbb{P}\Big\{|\sum_{i=1}^Na_iX_i|\ge t\Big\}\le 2\exp\Big[-c\min\big(\frac{t^2}{K^2\norm{a}^2},\frac{t}{K\|a\|_\infty}\big)\Big],
\]
where $c>0$ is an absolute constant.
\end{lemma}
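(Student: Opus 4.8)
The plan is to prove this by the classical Chernoff (exponential Markov) method: reduce the tail bound for the linear combination $S=\sum_i a_iX_i$ to a bound on its moment generating function, factor that bound using independence, and then optimize over a free exponential parameter. The only distributional input needed is that control of the $\psi_1$-norm is equivalent to a near-origin exponential moment bound, and the two-regime structure of the final estimate will come out of a constraint on the admissible range of the optimization parameter.

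First I would record the moment generating function characterization of centered sub-exponential variables: if $Z$ is centered with $\|Z\|_{\psi_1}\le K$, then there are absolute constants $C_0,c_0>0$ with
$$\E\exp(\lambda Z)\le \exp\big(C_0\lambda^2K^2\big)\quad\text{whenever}\quad \abs{\lambda}\le \frac{c_0}{K}.$$
I would either cite this equivalence or derive it quickly: the $\psi_1$-norm supplies the moment growth $\E\abs{Z}^p\le (CK)^p\,p!$, and expanding $\exp(\lambda Z)$ in a Taylor series, using $\E Z=0$ to kill the linear term, leaves a geometric series in $C K\abs{\lambda}$ that is summable (and controlled by $\exp(C_0\lambda^2K^2)$) once $\abs{\lambda}\le c_0/K$.

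Next, for fixed $a\in\R^N$ and $\lambda>0$, I would apply Markov's inequality to $\exp(\lambda S)$ and factor by independence,
$$\PP\Big\{\sum_{i=1}^N a_iX_i\ge t\Big\}\le e^{-\lambda t}\prod_{i=1}^N \E\exp(\lambda a_iX_i).$$
Each $a_iX_i$ is centered sub-exponential with $\psi_1$-norm $\abs{a_i}K$, so the previous bound applies to every factor provided $\abs{\lambda a_i}\le c_0/K$ for all $i$, i.e. provided $\abs{\lambda}\le c_0/(K\|a\|_\infty)$. On this range,
$$\prod_{i=1}^N\E\exp(\lambda a_iX_i)\le \exp\Big(C_0\lambda^2K^2\sum_{i=1}^N a_i^2\Big)=\exp\big(C_0\lambda^2K^2\norm{a}^2\big),$$
which gives $\PP\{\sum_i a_iX_i\ge t\}\le \exp(-\lambda t+C_0\lambda^2K^2\norm{a}^2)$ for all $0<\lambda\le c_0/(K\|a\|_\infty)$.

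Finally I would minimize the exponent $-\lambda t+C_0\lambda^2K^2\norm{a}^2$ over the admissible range. The unconstrained minimizer is $\lambda^\ast=t/(2C_0K^2\norm{a}^2)$, and the only genuinely delicate point is that $\lambda^\ast$ may violate the constraint $\lambda\le c_0/(K\|a\|_\infty)$, which forces the familiar two-regime split. When $\lambda^\ast$ is admissible (small $t$, the sub-gaussian regime) I substitute $\lambda=\lambda^\ast$ to obtain $\exp\!\big(-t^2/(4C_0K^2\norm{a}^2)\big)$; when it is not (large $t$, the heavy-tail regime) I take $\lambda$ equal to the boundary value $c_0/(K\|a\|_\infty)$ and verify that in this regime the linear term dominates the quadratic one, so the exponent is at most $-c_1 t/(K\|a\|_\infty)$. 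Writing the two cases together produces the claimed $\min$ of the two exponents, and applying the identical argument to $-\sum_i a_iX_i$ supplies the second tail, yielding the factor $2$ and the two-sided bound with $c=\min(1/(4C_0),c_1)$. The main obstacle is precisely this constrained optimization with the range restriction coming from $\|a\|_\infty$; everything else is a routine Chernoff computation.
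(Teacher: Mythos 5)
Your proof is correct and is essentially the canonical argument: the paper itself does not prove this lemma but cites it from Vershynin (Proposition 5.26 of the arXiv notes), and your Chernoff argument --- MGF bound for centered sub-exponential variables on the restricted range $\abs{\lambda}\le c_0/(K\|a\|_\infty)$, independence factorization, then the two-regime constrained optimization --- is exactly the proof given there. No gaps; the handling of the boundary case (linear term dominating the quadratic one when $\lambda^\ast$ is inadmissible) is the one delicate step and you treat it correctly.
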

\begin{lemma} \label{lemma 3.1}
For any $\delta>0$, assume that $m\ge 16\delta^{-2}n$ and $a_i, i=1,\ldots,m$ are Gaussian random vectors. Then for any positive semidefinite matrices $M\in \R^{n\times n}$,
\[(1-\delta)\|M\|_*\le \frac{1}{m}\sum_{i=1}^m a_i^\T M a_i \le (1+\delta)\|M\|_* \]
holds on an event $E_\delta$ of probability at least $1-2\exp(-m\epsilon^2/2)$, where $\delta/4=\epsilon^2+\epsilon$ and the norm $\|\cdot\|_*$ denotes the nuclear norm of a matrix. In particular, the right inequality holds for all matrices.
\end{lemma}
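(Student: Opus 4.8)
\emph{Proposal.} The plan is to reduce the uniform two-sided statement to a single operator-norm estimate on the centered deviation matrix
\[
B \;:=\; \frac{1}{m}\sum_{i=1}^m a_ia_i^\T - I ,
\]
and then to control $\norms{B}$ (the operator norm) by Gaussian concentration. First note that $\E[a_ia_i^\T]=I$, so $\E B=0$, and for any symmetric $M$,
\[
\frac{1}{m}\sum_{i=1}^m a_i^\T M a_i \;=\; \tr\!\Big(M\cdot\tfrac1m\sum_{i=1}^m a_ia_i^\T\Big) \;=\; \tr(M) + \tr(MB).
\]
By duality of the nuclear and operator norms, $\abs{\tr(MB)}\le \|M\|_*\,\norms{B}$, while $\tr(M)\le\|M\|_*$ for every symmetric $M$ and $\tr(M)=\|M\|_*$ when $M\succeq 0$. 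Hence, on the event $\{\norms{B}\le\delta\}$, the upper bound $\frac1m\sum_i a_i^\T Ma_i\le(1+\delta)\|M\|_*$ holds for all matrices (replace $M$ by $(M+M^\T)/2$, which does not increase $\|M\|_*$), and when $M\succeq 0$ the matching lower bound $(1-\delta)\|M\|_*$ follows as well. So it suffices to prove $\PP\{\norms{B}\le\delta\}\ge 1-2\exp(-m\ep^2/2)$.

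For this I would write $\norms{B}=\max\{\lambda_{\max}-1,\,1-\lambda_{\min}\}$, where $\lambda_{\max},\lambda_{\min}$ are the extreme eigenvalues of $\frac1m A^\T A$ with $A\in\R^{m\times n}$ the Gaussian matrix whose rows are $a_i^\T$. These are governed by the extreme singular values of $A$, and I would invoke the sharp Gaussian bound $\sqrt m-\sqrt n-t\le s_{\min}(A)\le s_{\max}(A)\le\sqrt m+\sqrt n+t$, valid with probability at least $1-2\exp(-t^2/2)$. Choosing $t=\ep\sqrt m$ makes the failure probability exactly $2\exp(-m\ep^2/2)$, and with $\eta:=\sqrt{n/m}+\ep$ one gets $\norms{B}\le 2\eta+\eta^2$. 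The hypothesis $m\ge 16\delta^{-2}n$ forces $\sqrt{n/m}\le\delta/4$, and this estimate together with the defining relation $\delta/4=\ep^2+\ep$ is exactly what is calibrated so that $2\eta+\eta^2\le\delta$, giving $\norms{B}\le\delta$. The same shape of bound follows from Lemma~\ref{A introduction} applied with $\Sigma=I$, which yields $\norms{\frac1m A^\T A-I}\le\max(\delta',\delta'^2)$ for $\delta'=C\sqrt{n/m}+t/\sqrt m$; the Gaussian-specific singular-value bound is what produces the clean absolute constants appearing in the statement.

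A fully self-contained alternative, using only the inequalities already recorded, is an $\ep$-net argument: fix a $\tfrac14$-net $\Lambda$ of the unit sphere with $\abs{\Lambda}\le 9^n$, use $\norms{B}\le 2\max_{u\in\Lambda}\abs{u^\T B u}$, and for each fixed $u$ apply the Bernstein inequality of Lemma~\ref{Bernstein inequality} to the i.i.d.\ centered sub-exponential variables $(a_i^\T u)^2-1$ (here $a_i^\T u$ is standard normal, so these are $\chi^2_1-1$). A union bound over $\Lambda$ then delivers the tail, provided $m\ge C\delta^{-2}n$ absorbs the entropy $\log\abs{\Lambda}\le n\log 9$ of the net. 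In either route the single genuine difficulty is the \emph{uniformity over the sphere}: concentration of $\frac1m\sum_i(a_i^\T u)^2$ about $1$ for a fixed direction $u$ is routine, and the real work is upgrading it to a simultaneous bound over all unit vectors while keeping the exponent large enough that $m\ge 16\delta^{-2}n$ still beats the net cardinality, equivalently so that $\sqrt{n/m}$ is pushed below $\delta/4$.
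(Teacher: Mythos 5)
Your proposal is correct in substance but takes a genuinely different, and more self-contained, route than the paper. The paper's proof is split in two: the two-sided bound for positive semidefinite $M$ is not proved at all but cited from Lemma~3.1 of \cite{phaselift}, and only the all-matrices upper bound is argued directly, by taking the SVD $M=\sum_j\sigma_ju_jv_j^\T$, reducing to $\frac1m\sum_i(a_i^\T u)(a_i^\T v)\le\frac{1}{2m}\big(\norm{Au}^2+\norm{Av}^2\big)\le\sigma_{\max}^2(A)/m$ for unit $u,v$, and invoking the Gaussian deviation bound $\PP(\sigma_{\max}(A)\ge\sqrt m+\sqrt n+t)\le\exp(-t^2/2)$ with $t=\epsilon\sqrt m$. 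You instead derive everything from the single event $\{\norms{B}\le\delta\}$ with $B=\frac1m\sum_ia_ia_i^\T-I$, via the identity $\frac1m\sum_ia_i^\T Ma_i=\tr(M)+\tr(MB)$ and the duality $\abs{\tr(MB)}\le\|M\|_*\norms{B}$, together with $\tr(M)=\|M\|_*$ for $M\succeq0$ and $\tr(M)\le\|M\|_*$ after symmetrization in general. This buys a unified treatment of both inequalities (including the part the paper outsources to \cite{phaselift}) from one operator-norm estimate, and it makes the uniformity over $M$ completely transparent, since the event does not depend on $M$; both routes ultimately rest on the same singular-value concentration for $A$, so the probability bound comes out identically.

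One quantitative step in your write-up does not close as claimed. You assert that $m\ge16\delta^{-2}n$ (i.e.\ $\sqrt{n/m}\le\delta/4$) together with $\delta/4=\epsilon^2+\epsilon$ is "exactly what is calibrated so that $2\eta+\eta^2\le\delta$." It is not: with $\eta\le\delta/4+\epsilon=2\epsilon+\epsilon^2$ one has $1+\eta\le(1+\epsilon)^2$, hence
\begin{equation*}
2\eta+\eta^2=(1+\eta)^2-1\le(1+\epsilon)^4-1=4\epsilon+6\epsilon^2+4\epsilon^3+\epsilon^4,
\end{equation*}
which strictly exceeds $\delta=4\epsilon+4\epsilon^2$ for every $\epsilon>0$. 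What does calibrate exactly is the stronger condition $\sqrt{n/m}\le\epsilon$, i.e.\ $m\ge\epsilon^{-2}n$: then $\eta\le2\epsilon$ and $2\eta+\eta^2\le4\epsilon+4\epsilon^2=\delta$. This is precisely the condition the paper's own proof invokes at the end ("take $m\ge\epsilon^{-2}n$ and $t=\sqrt m\epsilon$"); since $\epsilon\le\epsilon+\epsilon^2=\delta/4$, the requirement $m\ge\epsilon^{-2}n$ is slightly stronger than the stated $m\ge16\delta^{-2}n$, a slack already present in the statement inherited from \cite{phaselift} rather than something you introduced, but your proof should be run under $m\ge\epsilon^{-2}n$. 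Your $\epsilon$-net alternative via Lemma~\ref{Bernstein inequality} is sound as a fallback, but, as you acknowledge, it only yields unspecified constants and a different exponent, so it cannot reproduce the exact event probability $1-2\exp(-m\epsilon^2/2)$ demanded by the lemma; the singular-value route is the one to keep.
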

\begin{proof}
The first part of this lemma is a direct consequence of  Lemma 3.1 in \cite{phaselift}. Hence, we only need to prove that the right inequality holds for all matrices.
We assume the rank of matrix $M$ is $r$. Then by the singular-value decomposition, we can write $M=\sum_{j=1}^r \sigma_ju_jv_j^\T$, where $u_j,v_j$ are unit vectors. It implies that we just need to show
\begin{equation*}
  \frac{1}{m}\sum_{i=1}^m (a_i^\T u) (a_i^\T v) \le 1+\delta
\end{equation*}
holds for any fixed unit vectors $u,v$. Indeed, if we denote $A:=(a_1,\dots,a_m)^\T$, then
\begin{eqnarray*}
  \sum_{i=1}^m (a_i^\T u) (a_i^\T v) &\le & \frac{1}{2}\sum_{i=1}^m (a_i^\T u)^2+ \frac{1}{2}\sum_{i=1}^m (a_i^\T v)^2 \\
   &=&  (\norm{Au}^2+\norm{Av}^2)/2\\
   &\le & \sigma_{\max}^2(A),
\end{eqnarray*}
where $\sigma_{\max}^2(A)$ is the maximum singular value of $A$. From the well known deviations bounds concerning the singular values of Gaussian random matrices, i.e.,
\begin{equation*}
  \PP(\sigma_{\max}(A)\ge \sqrt{m}+\sqrt{n}+t)\le \exp(-t^2/2),
\end{equation*}
we arrive the conclusion if we take $m\ge \epsilon^{-2}n$ and $t=\sqrt{m}\epsilon$.

\end{proof}
\section{Exponential-type Gradient Descent Algorithm}
Our aim is to recover a  matrix $X\in \Rnr$ (up to right multiplication by an orthogonal matrix) from quadratic measurements
\[
y_i=\norm{a_i^\T X}^2 , \quad i=1,\ldots,m
\]
by solving the non-convex optimization problem
\begin{equation}\label{eq:alg}
 \minm{U\in \Rnr}  f(U)=\frac{1}{4m}\sum_{i=1}^m(y_i-\norm{a_i^\T U}^2)^2.
 \end{equation}
In this section, we will introduce an exponential-type gradient descent algorithm  for solving (\ref{eq:alg}).
\subsection{Spectral Initialization}
 The first step of our algorithm is to choose a good initial guess. In \cite{thelocal},
  Sanghavi, Ward and  White choose $U_0=Z\Lambda^{1/2}$ as the initial guess, where the columns of $Z\in \Rnr$ are the normalized eigenvectors corresponding to the $r$ largest eigenvalues $\lambda_1\ge\cdots\ge\lambda_r$ of the matrix $Y=\frac{1}{2m}\sum\limits_{i=1}^my_ia_ia_i^\T$ and the  diagonal matrix $\Lambda={\rm diag}(\Lambda_1,\ldots,\Lambda_r)$ is given by $\Lambda_i=\lambda_i-\lambda_{r+1}$.  To guarantee the convergence of the iterative method,
 the initialization method introduced in \cite{thelocal} requires $ O(nr^2\log^2 n)$  measurements \cite{thelocal}. Motivated by the methods for choosing the initial guess in \cite{TWF} and \cite{thelocal}, we introduce a novel  initialization method which is stated in Algorithm \ref{initialization1}. We prove that the new method just need $O(nr)$ measurements to obtain the same accuracy as the method suggested in \cite{thelocal}.
\begin{algorithm}[H]
\caption{Initialization}\label{initialization1}
\begin{algorithmic}[H]
\Require
Measurements $ y_i=\|a_i^\T X\|^2, i=1,\ldots,m $, where $a_i$ are Gaussian random vectors; parameter $\alpha_y >0$. \\
Define $ U_0=U\Sigma^{1/2}$, where the columns of $U$ are the normalized eigenvectors corresponding to the $r$ largest eigenvalues $ \lambda_1 \ge \cdots \ge \lambda_{r} $ of the matrix
\[
Y=\frac{1}{m}\sum_{i=1}^{m}y_ia_ia_i^\T \1_{\{y_i \le \frac{\alpha_y}{m}\sum_{k=1}^{m} y_k\}} \]
and  the diagonal matrix $\Sigma$ is given by
\[ \Sigma_{i,i}=\frac{1}{2}(\lambda_i-\lambda_{r+1}).
\]
\Ensure
Initial guess $ U_0 $.
\end{algorithmic}
\end{algorithm}
In our analysis, we require that the parameter $\alpha_y$ in Algorithm 1 satisfies $\alpha_y \ge C\sqrt{\log(c\kappa r)}$, where $\kappa$ is the ratio of the largest to the smallest nonzero eigenvalues of matrix $XX^\T$ and $C,c$ are universal constants. It means that the choice of $\alpha_y$ only depends on  the condition number $\kappa$  and the rank $r$ of $X$,
\subsection{Exponential-type Gradient Descent}
The next step of our algorithm  is to refine the initial guess by  an update rule to search the global optimal solution.
 In \cite{thelocal},  Sanghavi, Ward and  White iteratively update $U$ via gradient descent and they also prove the gradient descent method  converges to the global  optimal solution provided $m\geq Cnr\log^2n$.  We next introduce   an exponential-type gradient descent update rule.

For $k=0,1,\ldots$, we take the iteration step as
\begin{equation}\label{itera step}
 U_{k+1}=U_k-\mu\nabla f_{\ex}(U_k),
\end{equation}
where $ \nabla f_{\ex}(\cdot) $ denotes the exponential-type gradient given by
\begin{equation} \label{gradient}
\nabla f_{\ex}(U)=\frac{1}{m}\sum_{i=1}^m(a_i^\T UU^\T a_i-a_i^\T XX^\T a_i)a_ia_i^\T U \cdot \exp\Big(-\frac{my_i}{\alpha\sum_{k=1}^my_k}\Big),
\end{equation}
where $\alpha>0$.
We state our algorithm as follows:
\begin{algorithm}[H]
\caption{Exponential-type Gradient Descent Algorithm}
\begin{algorithmic}[H]
\Require
Measurement vectors: $a_i\in \R^n, i=1,\ldots,m $; Observations: $y\in \R^m$;  Parameter $\alpha$; Step size $\mu$; $\epsilon>0$  \\
\begin{enumerate}
\item[1:] Set $T:=c\log \frac{1}{\epsilon}$, where $ c $ is a sufficient large constant.
\item[2:] Use Algorithm 1 to compute an initial guess $U_0$ .
\item[3:] For $k=0,1,2,\ldots,T-1$ do
\[\begin{array}{ll}
    U_{k+1} & =U_k-\mu\nabla f_{\ex}(U_k)\\
    \nonumber & =U_k-\frac{\mu}{m}\sum_{i=1}^m(a_i^\T UU^\T a_i-y_i)a_ia_i^\T U \cdot \exp\Big(-\frac{my_i}{\alpha\sum_{k=1}^my_k}\Big)
  \end{array}
\]
\item[4:] End for
\end{enumerate}

\Ensure
The matrix $ U_T $.
\end{algorithmic}
\end{algorithm}

\begin{remark}
There is a parameter $\alpha$ in Algorithm 2.
Throughout this paper, we select the parameter $\alpha\geq 20$.
Numerical experiments in Section 6 show that the algorithm's
performance is not sensitive to the selection of $\alpha$.
\end{remark}

\section{Main results}
In this section we present our main results which give the theoretical guarantee of Algorithm 2.  We first  study Algorithm 1 with showing that our initial guess $U_0$ is
not far from $\{ XO : O \in {\mathcal O}(r) \}$.
\begin{theorem}\label{initial theorem}
Suppose that  $m \ge c_0\sigma_r^{-2}\normf{X}^4nr$  and
\[
y_i=a_i^\T XX^\T a_i=\norm{a_i^\T X}^2,\,\,  i=1,\ldots,m
\]
 where $a_i\in \R^n$ is the Gaussian random vector.
 Let $ U_0 $ be the output of Algorithm \ref{initialization1} with $\alpha_y \ge C\sqrt{\log(c\kappa r)}$, where $\kappa=\sigma_1/\sigma_r$ denotes the ratio of the largest to the smallest nonzero eigenvalues of the matrix $XX^\T$. Then with probability at least $ 1-6\exp(-\Omega(n))$ we have
\[
d(U_0)\,\, \le\,\, \sqrt{\frac{\sigma_r}{8}},
 \]
where $c,c_0$ and $C$ are absolute constants, and $d(U_0)$ is defined as
\[
d(U_0):=\mathop{\min} \limits_{O \in \mathcal{O}(r)} \|XO-U_0\|_F.
\]
\end{theorem}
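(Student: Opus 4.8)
The plan is to pass to the population (expected) version of the matrix $Y$ defined in Algorithm~\ref{initialization1}, show it has exactly the correct eigenstructure, and then transfer this to the empirical $Y$ by concentration together with eigenvector perturbation. Write $M=XX^\T$, let $u_j=x_j/\norm{x_j}$ be the orthonormal eigenvectors of $M$ with eigenvalues $\sigma_j=\norm{x_j}^2$, so that $\tr(M)=\normf{X}^2=\sum_{j=1}^r\sigma_j$. First I would replace the data-dependent threshold by a deterministic one: since $y_k=a_k^\T M a_k$ are i.i.d.\ sub-exponential with mean $\tr(M)$, Lemma~\ref{Bernstein inequality} shows that $\frac1m\sum_k y_k$ concentrates at $\normf{X}^2$, so on a high-probability event the indicator $\1_{\{y_i\le\frac{\alpha_y}{m}\sum_k y_k\}}$ coincides with $\1_{\{y_i\le\beta\}}$, $\beta:=\alpha_y\normf{X}^2$, after a negligible adjustment of constants. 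Working then in the eigenbasis of $M$ with $a=(a_1,\dots,a_n)$ and $y=\sum_{j\le r}\sigma_j a_j^2$, invariance of the Gaussian law under coordinate sign flips and under permutations of the null coordinates $a_{r+1},\dots,a_n$ forces $\E[y\,aa^\T\1_{\{y\le\beta\}}]$ to be diagonal, equal to one common value $\gamma_0$ on every null coordinate and to values $d_1,\dots,d_r$ on the signal coordinates. Hence $\E Y=\gamma_0 I+\sum_{j\le r}(d_j-\gamma_0)u_ju_j^\T$, so $\mathrm{Col}(X)$ is \emph{exactly} the top-$r$ eigenspace of $\E Y$, with spectral gap $d_r-\gamma_0$.

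The second step is to evaluate the truncated integrals defining $d_j$ and $\gamma_0$. Dropping the indicator gives the exact identity $\E[y\,aa^\T]=\tr(M)I+2M$, i.e.\ $\tfrac12(d_j-\gamma_0)=\sigma_j$, so the whole effect of truncation is a bias; I would show that for $\alpha_y\ge C\sqrt{\log(c\kappa r)}$ the indicator discards only a controlled tail of the Gaussian mass, yielding $\tfrac12(d_j-\gamma_0)=\sigma_j(1+\theta_j)$ with $|\theta_j|$ uniformly small and $d_r-\gamma_0\gtrsim\sigma_r$ over the range $\sigma_r\le\sigma_j\le\kappa\sigma_r$. In particular $\Sigma$ recovers the correct scaling in expectation, and in the untruncated ideal case $U_0U_0^\T=\sum_j\sigma_j u_ju_j^\T=M$, i.e.\ $U_0$ equals $X$ up to a right orthogonal factor.

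Third, I would prove $\norm{Y-\E Y}\le\delta\sigma_r$ for a small constant $\delta$. The truncation is exactly what makes this cheap: each summand $y_ia_ia_i^\T\1_{\{\cdot\}}$ is effectively bounded by $\beta\norm{a_i}^2$, so every $\langle u,(Y-\E Y)v\rangle$ is sub-exponential with a controlled $\psi_1$-norm, Lemma~\ref{Bernstein inequality} applies, and a covering-net argument over the sphere upgrades the scalar bound to operator norm. The variance scale $\normf{X}^2$ together with the target precision $\sim\sigma_r$ produces the relative factor $\sigma_r^{-2}\normf{X}^4$, the net over $\R^n$ contributes the factor $n$, and the remaining factor $r$ appears when the operator-norm bound is converted into the Frobenius (rank-$r$) distance $d(U_0)$ at the end; this is the step that yields the announced $m\ge c_0\sigma_r^{-2}\normf{X}^4 nr$ \emph{without} an extra $\log n$.

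Finally I would assemble the estimate by perturbation theory. Weyl's inequality gives $|\lambda_i-d_i|\le\norm{Y-\E Y}$ and $|\lambda_{r+1}-\gamma_0|\le\norm{Y-\E Y}$, so $\Sigma_{i,i}=\tfrac12(\lambda_i-\lambda_{r+1})$ stays within $O(\delta\sigma_r)$ of $\sigma_i(1+\theta_i)$; and since the gap $d_r-\gamma_0\gtrsim\sigma_r$ dominates $\norm{Y-\E Y}$, the Davis--Kahan theorem shows that the top-$r$ eigenspace $\mathrm{Col}(U)$ of $Y$ is close to $\mathrm{Col}(X)$. Writing $U_0=U\Sigma^{1/2}$ and choosing the aligning $O\in\mathcal{O}(r)$, the quantity $\|XO-U_0\|_F$ splits into a subspace-misalignment term (controlled by Davis--Kahan, times $\sqrt r$) and a scaling term (controlled by the Weyl bound and the bias $\theta_i$); taking $c_0,C$ large makes each at most $\tfrac12\sqrt{\sigma_r/8}$, so $d(U_0)\le\sqrt{\sigma_r/8}$, and the probability $1-6\exp(-\Omega(n))$ is the union of the threshold-concentration, matrix-concentration, and Lemma~\ref{lemma 3.1} singular-value events. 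The main obstacle is the joint calibration of $\beta$: it must be pushed high enough---precisely the regime $\alpha_y\ge C\sqrt{\log(c\kappa r)}$---that the bias $\theta_j$ stays uniformly small on \emph{every} signal eigenvalue across $[\sigma_r,\kappa\sigma_r]$, yet low enough that the bounded summands still give Bernstein concentration linear in $n$; carrying out these truncated Gaussian moment estimates uniformly in $j$ is the crux of the argument.
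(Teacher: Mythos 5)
Your proposal follows the same skeleton as the paper's proof for all but the last step: (i) you replace the data-dependent threshold by deterministic ones via concentration of $\frac1m\sum_k y_k$ --- the paper does this as a two-sided sandwich $Y_2\preceq Y\preceq Y_1$, exploiting that the summands $y_ia_ia_i^\T$ are positive semidefinite, which is the rigorous form of your ``negligible adjustment of constants''; (ii) you show $\E Y\approx I+2XX^\T$ with the truncation bias killed by $\alpha_y\ge C\sqrt{\log(c\kappa r)}$ --- the paper's Claim \ref{claim of estimate Y1}, proved by H\"older plus the sub-exponential tail bound of Lemma \ref{Bernstein inequality}; (iii) your net-plus-Bernstein concentration of $\norm{Y-\E Y}$ is exactly what the paper gets by citing Lemma \ref{A introduction} with rows $\sqrt{y_i}\,a_i^\T\1_{\{\cdot\}}$ (bounded thanks to the truncation), and your accounting of where $\sigma_r^{-2}\normf{X}^4$, $n$, and $r$ enter matches the paper's choice $\delta\lesssim\sigma_r/\sqrt r$ coming from $\normf{A}\le\sqrt{\rank(A)}\norm{A}$.

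The genuine divergence is the endgame, and there your plan has a subtlety you do not address. The paper never invokes Davis--Kahan: it bounds $\norm{U_0U_0^\T-XX^\T}\le 4\delta$ directly (using that $U_0U_0^\T$ is $\tfrac12(Y-\lambda_{r+1}I)$ compressed to the top-$r$ eigenspace, plus Weyl), and then applies a Procrustes-type inequality borrowed from the initialization analysis of Zheng--Lafferty,
\[
\mathop{\min}\limits_{O\in\mathcal{O}(r)}\|U_0-XO\|_F^2\;\le\;\frac{\normf{U_0U_0^\T-XX^\T}^2}{(2\sqrt{2}-2)\,\sigma_r},
\]
which converts Gram-matrix closeness into Frobenius alignment in one stroke. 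Your route --- Weyl for the eigenvalues, Davis--Kahan for the subspace, then splitting $\|XO-U\Sigma^{1/2}\|_F$ into a misalignment term and a scaling term --- runs into trouble when the signal eigenvalues $\sigma_j$ are repeated or clustered, which the theorem allows: then individual eigenvectors of $Y$ do not align with individual columns of $X$, and the orthogonal matrix $O$ produced by subspace alignment does not commute with $\Sigma^{1/2}$ (or with $\diag(\sigma_1,\ldots,\sigma_r)^{1/2}$), so the proposed split is not literal. It can be repaired by a cluster/block argument (within a cluster of nearly equal eigenvalues the diagonal scaling is nearly a multiple of the identity, so $O$ nearly commutes with it), but the clean fix is precisely the Procrustes inequality above. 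In short: your analysis of $Y$ is the paper's analysis; the paper's final lemma buys immunity to eigenvalue multiplicity, which your Davis--Kahan endgame would have to handle by hand.
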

We next consider the convergence  property of Algorithm 2.
\begin{theorem}\label{itarates}
Suppose that  $m \ge c_0\sigma_r^{-2}\normf{X}^4nr\log(c_1r\normf{X}^2/\sigma_r)$  and
\[
y_i=a_i^\T XX^\T a_i=\norm{a_i^\T X}^2,\,\,  i=1,\ldots,m
\]
 where $a_i\in \R^n$ is the Gaussian random vector. Suppose that $U_k\in \R^{n\times r}$ satisfies  $d(U_k) \le \sqrt{\frac{1}{8}\sigma_r}$. The $U_{k+1}$ is defined by the update rule
 (\ref{itera step}) with the step size $\mu\le \frac{\sigma_r^3}{c_2\sigma_1\normf{X}^6}$.
 Then with probability at least $1-C\exp(-\Omega(n))$, the iteration  step (\ref{itera step}) satisfies
\begin{equation} \label{iterate in theorem}
d(U_{k+1}) \le \Big(1-\rho_0\Big)^{1/2}d(U_k),
\end{equation}
where $\rho_0=\frac{2\mu\sigma_r}{7}$.
\end{theorem}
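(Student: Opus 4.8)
The plan is to prove a one-step contraction by expanding the squared distance and controlling the cross term and the gradient term separately. Fix the iterate $U_k$, let $O_k=\argmin{O\in\mathcal{O}(r)}\normf{XO-U_k}$ be the aligning orthogonal matrix, and set $\Delta:=U_k-XO_k$, so that $d(U_k)=\normf{\Delta}$. Since $O_k$ is a feasible (though generally non-optimal) alignment for $U_{k+1}$, I would start from
\begin{align*}
d^2(U_{k+1}) &\le \normf{XO_k-U_{k+1}}^2=\normf{\Delta-\mu\nabla f_{\ex}(U_k)}^2\\
&=\normf{\Delta}^2-2\mu\innerp{\Delta,\nabla f_{\ex}(U_k)}+\mu^2\normf{\nabla f_{\ex}(U_k)}^2 .
\end{align*}
It then suffices to prove a curvature (regularity) lower bound and a gradient upper bound,
\begin{align*}
\innerp{\Delta,\nabla f_{\ex}(U_k)}&\ge c_a\,\sigma_r\,\normf{\Delta}^2,\tag{A}\\
\normf{\nabla f_{\ex}(U_k)}&\le c_b\,\normf{X}^2\,\normf{\Delta},\tag{B}
\end{align*}
for absolute constants $c_a,c_b$, because they give $d^2(U_{k+1})\le\big(1-2\mu c_a\sigma_r+\mu^2 c_b^2\normf{X}^4\big)\normf{\Delta}^2$. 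The step-size bound $\mu\le\sigma_r^3/(c_2\sigma_1\normf{X}^6)$ makes $\mu^2c_b^2\normf{X}^4$ a small fraction of $2\mu c_a\sigma_r$ once $c_2$ is chosen large (using $\sigma_r\le\sigma_1\le\normf{X}^2$ to absorb the powers), leaving a contraction factor $1-\rho_0$ with $\rho_0=\tfrac{2\mu\sigma_r}{7}$.

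For the curvature bound (A), which is the crux, I would first decouple the exponential weights $w_i=\exp\!\big(-\tfrac{m y_i}{\alpha\sum_{j=1}^m y_j}\big)$ from the shared global normalization $\sum_j y_j$. By Lemma~\ref{lemma 3.1}, $\tfrac1m\sum_j y_j=\tfrac1m\sum_j a_j^\T XX^\T a_j$ concentrates around $\tr(XX^\T)=\normf{X}^2$ with probability $1-2\exp(-\Omega(m))$, so each $w_i$ may be replaced by the i.i.d.\ bounded function $\tilde w_i:=\exp(-y_i/(\alpha\normf{X}^2))\in(0,1]$ of $a_i$ alone, up to a uniform error controlled by the exponential decay of the weight. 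With deterministic weights, the correlation is an average of independent terms; expanding the residual as $a_i^\T U_kU_k^\T a_i-y_i=2\,a_i^\T XO_k\Delta^\T a_i+\norm{\Delta^\T a_i}^2$, I would compute its expectation and apply the Bernstein-type bound of Lemma~\ref{Bernstein inequality} to the deviation. Here the key algebraic input is that the optimal $O_k$ makes $O_k^\T X^\T U_k$ symmetric and hence $O_k^\T X^\T\Delta$ symmetric (recall $X^\T X$ is diagonal because $X$ has orthogonal columns); this symmetry lets the leading weighted quadratic form be shown positive definite with smallest value $\gtrsim\sigma_r\normf{\Delta}^2$, while the cubic and quartic remainders are absorbed using $d(U_k)\le\sqrt{\sigma_r/8}$.

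The gradient bound (B) is softer: since $0<w_i\le1$, the weighted gradient is dominated termwise by the unweighted one. Writing $\normf{\nabla f_{\ex}(U_k)}=\sup_{\normf{V}=1}\innerp{V,\nabla f_{\ex}(U_k)}$ and using the same residual expansion, each summand is a product of Gaussian linear forms in $a_i$; after removing the normalization in $w_i$ as above, a net over $V$ together with Lemma~\ref{A introduction} and Lemma~\ref{Bernstein inequality} controls the empirical operator by its expectation, which scales like $\normf{X}^2\normf{\Delta}$. A point I would be careful about is that $U_k$ is data-dependent, so (A) and (B) must be proved \emph{uniformly} over the region $\{U:d(U)\le\sqrt{\sigma_r/8}\}$ via a covering argument, after which they apply to the random iterate and combine with the displayed inequality to give $d(U_{k+1})\le(1-\rho_0)^{1/2}d(U_k)$.

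The hard part will be the curvature estimate (A): both the coupling of all samples through the shared normalization inside $w_i$, and the requirement that positive definiteness of the weighted Hessian-type expectation survive uniformly over the whole ball rather than at a single point. This uniformity is what drives the covering argument and produces the $\log(c_1 r\normf{X}^2/\sigma_r)$ factor in the sample complexity; crucially, the boundedness of the exponential weights (guaranteed by $\alpha\ge 20$ and the initialization parameter $\alpha_y\gtrsim\sqrt{\log(c\kappa r)}$) keeps the relevant variables sub-exponential with well-controlled $\psi_1$-norms, which is exactly what eliminates the extra $\log n$ factor present in the unweighted analyses of \cite{thelocal,zheng2015convergent}. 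Pinning down the precise constant $2/7$ and verifying that the step-size condition renders the gradient-norm term negligible are then routine but calculation-heavy.
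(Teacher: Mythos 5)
Your top-level plan is the same as the paper's: expand $\normf{U_k-\Xb_k-\mu\nabla f_{\ex}(U_k)}^2$, lower-bound the cross term by a curvature estimate, upper-bound the gradient norm, and use the step-size restriction to absorb the $\mu^2$ term. This is exactly Lemma \ref{itera} combined with Lemma \ref{regularity condition}, whose proof rests on the two estimates of Proposition \ref{pr:1}; your curvature bound (A) corresponds to Proposition \ref{pr:1}(a) (including the key symmetry of $H^\T\Xb$ obtained from the SVD of $X^\T U$, which the paper also uses), and your replacement of the shared normalization $\sum_k y_k$ by $m\normf{X}^2$ is (\ref{ei}).

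The genuine gap is your gradient bound (B). You assert $\normf{\nabla f_{\ex}(U_k)}\le c_b\normf{X}^2\normf{\Delta}$ with an \emph{absolute} constant $c_b$, but the paper's Proposition \ref{pr:1}(b) only establishes $\normf{\nabla f_{\ex}(U)}^2\le \frac{3\alpha^2}{\sigma_r^2}\left(\normf{H}^2+\normf{X}^2\right)\left(1.223\sigma_1\normf{H}^2+\tr^2(H^\T\Xb)+\normf{H^\T\Xb}^2\right)$, i.e.\ the bound degrades by a factor of order $\alpha^2\sigma_1/\sigma_r^2$. That factor is not an artifact of sloppiness, and your sketch has no mechanism to avoid it. The gradient contains quartic terms such as $\frac{1}{m}\sum_i(a_i^\T HH^\T a_i)(a_i^\T HW^\T a_i)\rho_{i,\alpha}$, which are fourth-order Gaussian chaoses: a product of two quadratic forms is only $\psi_{1/2}$, not sub-exponential, so Lemma \ref{Bernstein inequality} does not apply to it directly, and $\psi_{1/2}$ tails are too heavy to survive a union bound over an $\epsilon$-net of cardinality $e^{\Theta(nr)}$ when $m\asymp nr\log(\cdot)$. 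The exponential weight rescues this only because it can be converted into a cap on $a_i^\T HH^\T a_i$ — via (\ref{eq:budeng1}), $a_i^\T HH^\T a_i\le\normf{H}_F^2\norms{a_i}^2$ and $xe^{-\gamma x}\le 1/(e\gamma)$, as in Corollary \ref{ahhalow} — and this taming costs exactly a factor $\alpha/\sigma_r$. Moreover, for the gradient norm the paper never takes a net over the test matrix $W$ at all: it splits $\langle\nabla f_{\ex}(U),W\rangle$ by Cauchy--Schwarz and bounds the $W$-dependent factors pointwise and deterministically, which is where $(\alpha/\sigma_r)^2$ enters; your "net over $V$ plus Bernstein" route hits the $\psi_{1/2}$ obstruction. (The same issue already appears in (A), where the negative quartic remainder must be bounded uniformly; the paper again uses the weight there.) This gap is consequential for the statement itself: the $\alpha^2\sigma_1/\sigma_r^2$ loss is precisely why the theorem's admissible step size is $\mu\le\sigma_r^3/(c_2\sigma_1\normf{X}^6)$ — the paper takes $\mu\le 2/(\lambda\normf{X}^2)$ with $\lambda=250\alpha^2\sigma_1\normf{X}^4/\sigma_r^3$ — whereas if your (B) held with an absolute constant, a step size of order $\sigma_r/\normf{X}^4$ would suffice. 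To complete your argument you must either prove the weaker, $\sigma_r$-degraded gradient bound and redo the arithmetic of the combination (which is what Lemma \ref{regularity condition} does, multiplying (b) by $\gamma\sigma_r/\sigma_1$ before adding it to (a)), or supply a genuinely sharper uniform bound on the quartic chaos terms, which your proposal does not contain.
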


Combining Theorem \ref{initial theorem} and Theorem \ref{itarates}, we can obtain the following corollary which shows that Algorithm 2 is convergent with high probability provided  $ m\ge Cnr\log (cr)$.

\begin{corollary}
Suppose that  $m \ge c_0\sigma_r^{-2}\normf{X}^4nr\log(c_1r\normf{X}^2/\sigma_r)$  and
$y_i=a_i^\T XX^\T a_i=\norm{a_i^\T X}^2,\,\,  i=1,\ldots,m$ where $a_i\in \R^n$ is the Gaussian random vector.
Suppose that $\epsilon$ is an arbitrary constant within range $(0,\sqrt{\sigma_r/8})$.
Then with probability at least $1-C\exp(-\Omega(n))$, Algorithm 2 outputs $U_T$ satisfying
\[
 d(U_{T})\,\, \le\,\,  \epsilon
\]
provided the step size $\mu\le \frac{\sigma_r^3}{c_2\sigma_1\normf{X}^6}$ where
 $T\ge \log \frac{\sigma_r}{8\epsilon^2}\log \frac{1}{1-\rho_0}$ and $\rho_0=\frac{2\mu\sigma_r}{7}$.
\begin{proof}
According to Theorem \ref{initial theorem}, with probability at least $ 1-6\exp(-\Omega(n))$ we have
\[
d(U_0) \le \sqrt{\frac{\sigma_r}{8}}.
\]
From the iterative inequality (\ref{iterate in theorem}) in Theorem \ref{itarates}, we obtain that
\begin{eqnarray*}
  d(U_{T}) &\le & \Big(1-\rho_0\Big)^{1/2}d(U_{T-1}) \\
   &\le & \Big(1-\rho_0\Big)^{T/2}d(U_{0}) \\
   &\le & \sqrt{\frac{\sigma_r}{8}}\Big(1-\rho_0\Big)^{T/2} \\
   &\le &  \epsilon,
\end{eqnarray*}
which holds with probability at least $1-C\exp(-\Omega(n))$.
\end{proof}
\end{corollary}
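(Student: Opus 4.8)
The plan is to treat this corollary as a direct chaining of the two preceding theorems, so that the work is almost entirely bookkeeping. First I would invoke Theorem~\ref{initial theorem}: since the corollary's sample size $m \ge c_0\sigma_r^{-2}\normf{X}^4nr\log(c_1r\normf{X}^2/\sigma_r)$ dominates the requirement $m \ge c_0\sigma_r^{-2}\normf{X}^4nr$ of that theorem, the output $U_0$ of Algorithm~1 satisfies $d(U_0)\le \sqrt{\sigma_r/8}$ on an event $\mathcal{E}_0$ of probability at least $1-6\exp(-\Omega(n))$. This places the initial iterate inside the contraction basin $\{U : d(U)\le \sqrt{\sigma_r/8}\}$ that Theorem~\ref{itarates} takes as its standing hypothesis.

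Next I would run the contraction of Theorem~\ref{itarates} for $T$ steps, but the point that needs care is that this theorem's conclusion is conditioned on $d(U_k)\le \sqrt{\sigma_r/8}$ holding at the \emph{current} iterate. I would therefore argue by induction that the basin is forward-invariant: if $d(U_k)\le \sqrt{\sigma_r/8}$, then because the contraction factor satisfies $(1-\rho_0)^{1/2}<1$ (the step-size cap $\mu\le \sigma_r^3/(c_2\sigma_1\normf{X}^6)$ forces $0<\rho_0<1$ for $\rho_0=2\mu\sigma_r/7$), we get $d(U_{k+1})\le (1-\rho_0)^{1/2}d(U_k) < \sqrt{\sigma_r/8}$. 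With $\mathcal{E}_0$ supplying the base case, every iterate stays in the basin, so the one-step bound applies at each $k$ and telescopes to $d(U_T)\le (1-\rho_0)^{T/2}d(U_0)\le \sqrt{\sigma_r/8}\,(1-\rho_0)^{T/2}$. Solving $\sqrt{\sigma_r/8}\,(1-\rho_0)^{T/2}\le \epsilon$ for $T$ yields $T \ge \log(\sigma_r/(8\epsilon^2))/\log(1/(1-\rho_0))$, and the restriction $\epsilon\in(0,\sqrt{\sigma_r/8})$ guarantees this is a positive, finite iteration count.

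The one genuine subtlety I would flag concerns the probability accounting. Theorem~\ref{itarates} is phrased as a per-step statement holding with probability $1-C\exp(-\Omega(n))$, but the measurement vectors $a_i$ are drawn once and shared across all iterations, so the $T$ contraction inequalities are not independent. Rather than union-bounding over steps, I would read Theorem~\ref{itarates} as asserting a single regularity/concentration event $\mathcal{E}$, measurable with respect to the $a_i$, on which the one-step contraction holds \emph{deterministically} for every matrix in the basin; then the whole trajectory is controlled on $\mathcal{E}\cap\mathcal{E}_0$, whose probability remains $1-C\exp(-\Omega(n))$ after a single intersection. Even a crude union bound over the $T=O(\log(1/\epsilon))$ steps would be harmless here, since $T$ is negligible against $\exp(\Omega(n))$.

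Because the two theorems do all the heavy lifting, I do not expect a real obstacle inside the corollary itself; the only thing one must not overlook is the forward-invariance of the basin, so that the hypothesis of Theorem~\ref{itarates} is never violated along the way, together with fixing the good event once on the $a_i$ rather than resampling per iteration. Both become routine once stated precisely.
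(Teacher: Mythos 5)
Your proposal is correct and follows essentially the same route as the paper: invoke Theorem \ref{initial theorem} to place $U_0$ in the basin, then chain the contraction inequality (\ref{iterate in theorem}) of Theorem \ref{itarates} over $T$ steps and solve $\sqrt{\sigma_r/8}\,(1-\rho_0)^{T/2}\le\epsilon$ for $T$. The two points you flag --- forward-invariance of the basin $\{U : d(U)\le\sqrt{\sigma_r/8}\}$ and reading the regularity condition as a single good event on the $a_i$ rather than a per-iteration probability --- are exactly what the paper's terse proof relies on implicitly, so making them explicit is a strengthening of the write-up, not a departure from it.
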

\begin{remark}
According to Theorem \ref{itarates}, to guarantee Algorithm 2 converges to the true matrix, we  require that the step size
\begin{equation}\label{eq:step1}
\mu\,\,\le\,\, \sigma_r^3/(C\sigma_1\normf{X}^6).
\end{equation}
Noting that $\normf{X}^4=(\sigma_1+\cdots+\sigma_r)^2\le r^2\sigma_1^2$, we have $\sigma_r^3/(C\sigma_1\normf{X}^6)\ge 1/(C\kappa^3 r^2\normf{X}^2)$ which implies that
\begin{equation}\label{eq:stepsize}
\mu\,\, \le\,\, 1/(C\kappa^3 r^2\normf{X}^2)
\end{equation}
is enough to guarantee (\ref{eq:step1}) holds.
Recall that the algorithms in \cite{thelocal} and \cite{zheng2015convergent} require that
$\mu\le (1/Cn^4\log^4(nr)\normf{X}^2)$ and $\mu\le C/(\kappa n\normf{X}^2)$, respectively.
Comparing with the step size in \cite{thelocal} and \cite{zheng2015convergent}, our  step size is independent with the matrix dimension $n$.
\end{remark}

\section{The proof of the main results}
In this section we give the proof of the main results.  To state conveniently, for $U\in \R^{n\times r}$, we set
\begin{equation}\label{eq:xb}
\Xb:=\Xb_U:=\argmin{Z \in \mathcal{X}}\normf{U-Z},
\end{equation}
 where $\mathcal{X}:=\{XO:O\in \mathcal{O}(r)\}$, and $\mathcal{O}(r)$ is the set of $r\times r$ orthogonal matrices.

Motivated by the results in \cite{WF}, we next give the definition of the regularity condition.
 Under this condition, we shall prove that our algorithm converges linearly to the true matrix $X$ if the initial guess is not far from  it.
\begin{definition}[Regularity Condition]
We say that the function $f$ satisfies the regularity condition $RC(\nu,\lambda,\varepsilon)$ if there exist constants $\nu,\lambda$ such that for all matrices $U \in \Rnr$ satisfying $d(U)\le \varepsilon$ we have
\[
\langle\nabla f_{\ex}(U),U-\Xb\rangle\ge \frac{1}{\nu}\sigma_r\normf{U-\Xb}^2+\frac{1}{\lambda\normf{X}^2}\normf{\nabla f_{\ex}(U)}^2,
 \]
where $ \nabla f_{\ex}(\cdot) $ is defined in (\ref{gradient}) and $\Xb$ is defined in (\ref{eq:xb}).
\end{definition}

Under the assumption of $f$ satisfying the regularity condition, the next lemma shows the performance of the update rule.

\begin{lemma} \label{itera}
Assume that the function $f$ satisfies the regularity condition $RC(\nu,\lambda,\varepsilon)$ and $d(U_k)\le \varepsilon$. If we take the step size $\mu\le \min\left( \frac{\nu}{2\sigma_r}, \frac{2}{\lambda\normf{X}^2}\right)$,
then  $U_{k+1}=U_k-\mu\nabla f_{\ex}(U_k)$ satisfies
\[
d(U_{k+1})\,\,\le\,\, \sqrt{1-\frac{2\mu\sigma_r}{\nu}}d(U_k).
\]
\end{lemma}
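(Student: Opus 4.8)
The plan is to follow the standard template from the analysis of gradient-type methods (as in the Wirtinger Flow argument of \cite{WF}), where the regularity condition is designed precisely to make this convergence step nearly automatic. The key idea is to expand $d(U_{k+1})^2$ by comparing $U_{k+1}=U_k-\mu\nabla f_{\ex}(U_k)$ against a \emph{fixed} point of the solution manifold, namely $\Xb_{U_k}$, the nearest point of $\mathcal{X}$ to $U_k$.

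First I would observe that by the definition of $d(\cdot)$ in (\ref{distance}) as a minimum over $\mathcal{O}(r)$, we have
\[
d(U_{k+1})^2\,\,\le\,\, \normf{U_{k+1}-\Xb_{U_k}}^2,
\]
since $\Xb_{U_k}\in\mathcal{X}$ is a \emph{particular} competitor (not necessarily the optimal one for $U_{k+1}$). This is the crucial inequality that lets us avoid tracking how the minimizing orthogonal matrix changes from step $k$ to step $k+1$. Next I would substitute the update rule and expand the square:
\[
\normf{U_{k+1}-\Xb_{U_k}}^2\,=\,\normf{U_k-\Xb_{U_k}}^2-2\mu\langle\nabla f_{\ex}(U_k),U_k-\Xb_{U_k}\rangle+\mu^2\normf{\nabla f_{\ex}(U_k)}^2.
\]
Here I would recognize $\normf{U_k-\Xb_{U_k}}^2=d(U_k)^2$.

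The heart of the argument is then to control the cross term using the regularity condition $RC(\nu,\lambda,\varepsilon)$, which applies because $d(U_k)\le\varepsilon$. The condition gives
\[
\langle\nabla f_{\ex}(U_k),U_k-\Xb_{U_k}\rangle\,\ge\,\frac{1}{\nu}\sigma_r\,d(U_k)^2+\frac{1}{\lambda\normf{X}^2}\normf{\nabla f_{\ex}(U_k)}^2.
\]
Plugging this lower bound in (with the factor $-2\mu<0$, so the inequality flips appropriately) yields
\[
d(U_{k+1})^2\,\le\,\Big(1-\frac{2\mu\sigma_r}{\nu}\Big)d(U_k)^2+\Big(\mu^2-\frac{2\mu}{\lambda\normf{X}^2}\Big)\normf{\nabla f_{\ex}(U_k)}^2.
\]
The final step is to show the second coefficient is nonpositive, so that the gradient-norm term can be discarded. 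This is exactly where the step-size hypothesis $\mu\le 2/(\lambda\normf{X}^2)$ enters: it forces $\mu^2-2\mu/(\lambda\normf{X}^2)\le 0$. (The other constraint $\mu\le\nu/(2\sigma_r)$ is needed only to keep $1-2\mu\sigma_r/\nu\ge0$ so the square root is well defined.) Discarding the nonpositive term and taking square roots gives $d(U_{k+1})\le\sqrt{1-2\mu\sigma_r/\nu}\,d(U_k)$, as claimed.

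I do not anticipate a genuine obstacle here, since all the difficulty has been front-loaded into establishing the regularity condition itself (which is assumed as a hypothesis of this lemma). The only points requiring minor care are the direction of the inequality when multiplying the regularity lower bound by $-2\mu$, and verifying that both step-size constraints are used for their respective purposes. The verification that $\mu^2\le 2\mu/(\lambda\normf{X}^2)$ reduces to the stated bound $\mu\le 2/(\lambda\normf{X}^2)$ after dividing by the positive quantity $\mu$.
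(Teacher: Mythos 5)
Your proposal is correct and follows essentially the same argument as the paper: compare $U_{k+1}$ against the fixed competitor $\Xb_{U_k}$, expand the square, apply the regularity condition to the cross term, and use $\mu\le 2/(\lambda\normf{X}^2)$ to discard the gradient-norm term. Your additional remark that the constraint $\mu\le\nu/(2\sigma_r)$ serves only to keep $1-2\mu\sigma_r/\nu$ nonnegative is a nice clarification that the paper leaves implicit.
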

\begin{proof}
To state conveniently, we set
\begin{equation}\label{eq:xbk}
\Xb_k:=\argmin{Z \in \mathcal{X}}\normf{U_k-Z}.
\end{equation}
Under the regularity condition $RC(\nu,\lambda,\varepsilon)$, we have
\begin{align}
     d(U_{k+1})^2&\le \normf{U_k-\Xb_k-\mu\nabla f_{\ex}(U_k)}^2 \\
     \nonumber & =\normf{U_k-\Xb_k}^2-2\mu\langle\nabla f_{\ex}(U_k),U-\Xb_k\rangle+\mu^2\normf{\nabla f_{\ex}(U_k)}^2 \\
     \nonumber & \le \normf{U_k-\Xb_k}^2-2\mu\Big(\frac{1}{\nu}\sigma_r\normf{U_k-\Xb}^2+\frac{1}{\lambda\normf{X}^2}\normf{\nabla f_{\ex}(U_k)}^2\Big)
     +\mu^2\normf{\nabla f_{\ex}(U_k)}^2 \\
     \nonumber & =\left(1-\frac{2\mu\sigma_r}{\nu}\right)\normf{U_k-\Xb_k}^2+\mu(\mu-\frac{2}{\lambda\normf{X}^2})\normf{\nabla f_{\ex}(U_k)}^2\\
     \nonumber & \le \left(1-\frac{2\mu\sigma_r}{\nu}\right)d(U_k)^2,
\end{align}
where  the last inequality follows from $\mu\le \frac{2}{\lambda\normf{X}^2}$.
\end{proof}
Based on  Lemma \ref{itera}, the key point to prove Theorem \ref{itarates} is  to show that the function $f$ satisfies the regularity condition with high probability. The next lemma shows that $f$ satisfies the regularity condition provided  $ m \ge c_0\sigma_r^{-2}\normf{X}^4nr\log(c_1r\normf{X}^2/\sigma_r)$.
\begin{lemma}  \label{regularity condition}
Suppose $ m \ge c_0\sigma_r^{-2}\normf{X}^4nr\log(c_1r\normf{X}^2/\sigma_r)$ and $f$ is defined as (\ref{optimization problem}). Then $f$ satisfies the regularity condition $RC\Big(7,\frac{250\alpha^2\sigma_1\normf{X}^4}{\sigma_r^3},\sqrt{\frac{1}{8}\sigma_r}\Big)$ with probability at least $1-C\exp(-\Omega(n))$, where $\alpha$ is the constant in $\nabla f_{\ex}$ and $C,c_0,c_1$ are universal constants.
\end{lemma}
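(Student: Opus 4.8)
The plan is to obtain the regularity condition by combining a local curvature lower bound with a local smoothness upper bound, the same two-ingredient recipe used for Wirtinger flow, and then finishing with one elementary inequality. Throughout write $H:=U-\Xb$ so that $d(U)=\normf{H}$, and recall that $\Xb=XO$ for some $O\in\mathcal{O}(r)$ gives $\Xb\Xb^\T=XX^\T$; hence
\[
a_i^\T(UU^\T-XX^\T)a_i=2\innerp{\Xb^\T a_i,\,H^\T a_i}+\norm{H^\T a_i}^2 .
\]
Substituting this into (\ref{gradient}) expresses both $\innerp{\nabla f_{\ex}(U),H}$ and $\normf{\nabla f_{\ex}(U)}$ as weighted empirical averages of degree-three and degree-four polynomials in $a_i$, each damped by the bounded weight $w_i:=\exp\big(-my_i/(\alpha\sum_k y_k)\big)\in(0,1]$.

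First I would prove the local curvature bound: for every $U$ with $d(U)\le\sqrt{\sigma_r/8}$,
\[
\innerp{\nabla f_{\ex}(U),\,H}\ \ge\ \tfrac{2}{7}\,\sigma_r\normf{H}^2
\]
with probability $1-C\exp(-\Omega(n))$. The route has three parts. (i) Replace the normalization $\sum_k y_k$ by its concentrated value $m\normf{X}^2$, using Lemma \ref{lemma 3.1} (or a direct Bernstein bound via Lemma \ref{Bernstein inequality}); this turns $w_i$, up to a controlled multiplicative error, into the deterministic function $\exp(-\norm{a_i^\T X}^2/(\alpha\normf{X}^2))$ of $a_i$ alone. (ii) Evaluate the population inner product $\E\innerp{\nabla f_{\ex}(U),H}$ as a Gaussian integral with this exponential tilt and show it is bounded below by $c\,\sigma_r\normf{H}^2$ for a constant $c$ strictly larger than $2/7$ once $\alpha\ge20$ and $\normf{H}\le\sqrt{\sigma_r/8}$ (leaving slack for the deviation controlled in (iii)), the higher-order $\norm{H^\T a_i}^2$ contribution being absorbable since $\normf{H}$ is small. (iii) Upgrade the pointwise estimate to a uniform one over the neighborhood by a covering-net argument together with Lemma \ref{Bernstein inequality}; here the boundedness $w_i\le1$ and its exponential suppression of large $y_i$ are exactly what keep the sub-exponential norms of the summands under control and let the net be resolved with $O(nr\log(c_1r\normf{X}^2/\sigma_r))$ samples rather than $O(nr\log^2 n)$.

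Next I would establish the matching smoothness bound $\normf{\nabla f_{\ex}(U)}^2\le\frac{250\alpha^2\sigma_1\normf{X}^6}{7\sigma_r^2}\normf{H}^2$ on the same event, by writing $\normf{\nabla f_{\ex}(U)}=\max_{\normf{V}\le1}\innerp{\nabla f_{\ex}(U),V}$ and bounding each weighted average by its expectation plus a concentration error, now taking the net over the unit ball of test matrices $V$ as well. The factor $\alpha^2$ reflects the scaling of the weighted gradient with the parameter, while the operator-norm factors $\sigma_1\normf{X}^4/\sigma_r^2$ come from crudely bounding the relevant population matrices over $d(U)\le\sqrt{\sigma_r/8}$.

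With both bounds in force, on the intersection of the two high-probability events and for any admissible $U$,
\[
\innerp{\nabla f_{\ex}(U),H}\ \ge\ \tfrac{1}{7}\sigma_r\normf{H}^2+\tfrac{1}{7}\sigma_r\normf{H}^2\ \ge\ \tfrac{1}{7}\sigma_r\normf{H}^2+\frac{\sigma_r^3}{250\alpha^2\sigma_1\normf{X}^6}\normf{\nabla f_{\ex}(U)}^2,
\]
which is precisely $RC\big(7,\,250\alpha^2\sigma_1\normf{X}^4/\sigma_r^3,\,\sqrt{\sigma_r/8}\big)$ since $1/(\lambda\normf{X}^2)=\sigma_r^3/(250\alpha^2\sigma_1\normf{X}^6)$. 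The main obstacle is part (iii) of the curvature step (and its analogue for smoothness): the weight $w_i$ couples every summand through the global normalization $\sum_k y_k$, so the terms are not independent, and one must first decouple this normalization and then control the resulting exponentially weighted sub-exponential empirical process uniformly over the continuum of admissible $U$ (and over all unit test matrices $V$) while keeping the tails light enough to avoid any $\log n$ overhead — this is exactly where the exponential damping does its work.
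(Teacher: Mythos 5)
Your final combination step is sound algebra, and your two--ingredient skeleton superficially matches the paper's: Proposition \ref{pr:1} is indeed a curvature bound plus a gradient-norm bound. But the paper does not prove (and with these tools cannot prove) either of your two \emph{pure} bounds. Both parts of Proposition \ref{pr:1} retain the cross terms $Q:=\tr^2(H^\T\Xb)+\normf{H^\T\Xb}^2$, which for $H\propto\Xb$ can be as large as $\normf{X}^2\normf{H}^2$, i.e.\ far larger than $\sigma_r\normf{H}^2$ and, for flat spectra, $r$ times larger than $\sigma_1\normf{H}^2$. The lemma is then obtained by multiplying (\ref{eq:b}) by the small weight $\gamma\sigma_r/\sigma_1$ so that its $Q$-terms are \emph{cancelled} against the $0.78\,Q$ surplus kept in (\ref{eq:a}); this cancellation weight --- not any operator-norm bound on population matrices --- is where the factor $\sigma_1$ in $\lambda$ comes from. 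Your plan discards $Q$ from the lower bound and therefore needs $Q$ to be absent from the upper bound, which is exactly what is not available.

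Here is where your argument concretely breaks. Expanding $\langle\nabla f_{\ex}(U),H\rangle$ gives a quadratic term $\frac{2}{m}\sum_i(a_i^\T H\Xb^\T a_i)^2\rho_{i,\alpha}$, a quartic term, and the signed cubic term $\frac{3}{m}\sum_i(a_i^\T HH^\T a_i)(a_i^\T H\Xb^\T a_i)\rho_{i,\alpha}$. The cubic term is \emph{not} ``absorbable because $\normf{H}$ is small'': taking $U=(1-\epsilon)\Xb$ with $\epsilon=\sqrt{\sigma_r/8}$ (so $H=-\epsilon\Xb$ and $\normf{X}=1$), its expectation equals $-3\epsilon^3\,\E[(a^\T\Xb\Xb^\T a)^2\rho]\approx -c\sqrt{\sigma_r}\,\normf{H}^2$ with $c$ an absolute constant, which exceeds your target $\tfrac27\sigma_r\normf{H}^2$ by a factor of order $1/\sqrt{\sigma_r}$ (recall $\sigma_r\le\normf{X}^2/r$). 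So no concentration-around-the-mean argument (your step (iii)) can rescue a bound of the form $c\,\sigma_r\normf{H}^2$: the \emph{mean} itself is too negative, and the term is beaten only by the $Q$-part of the quadratic term (here $Q\approx\normf{H}^2$), which you have thrown away. This is precisely why the paper never concentrates the cubic term but bounds it pathwise by Cauchy--Schwarz against the quadratic and quartic sums while keeping $Q$, and even then it only gets $0.166\sigma_r\normf{H}^2+0.78\,Q$ --- whose pure part is below your required $2/7$. The smoothness half has the same defect: the uniform bound deliverable by the exponential-damping technique, namely (\ref{eq:b}), carries $Q$ with prefactor $3\alpha^2(\normf H^2+\normf X^2)/\sigma_r^2$, so for a flat spectrum with $H\propto\Xb$ it exceeds your claimed $\frac{250\alpha^2\sigma_1\normf{X}^6}{7\sigma_r^2}\normf{H}^2$ by a factor growing like $r$; nothing in your sketch supplies the sharper uniform estimate that would be needed. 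The missing idea, in short, is the retention and mutual cancellation of the cross terms $Q$.
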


We next state the proof of Theorem \ref{itarates}.

\begin{proof}[Proof of Theorem \ref{itarates}]
According to Lemma \ref{regularity condition}, if $ m \ge c_0\sigma_r^{-2}\normf{X}^4nr\log(c_1r\normf{X}^2/\sigma_r)$, then $f$ satisfies the regularity condition
with $\nu=7$, $\lambda=250\alpha^2\sigma_1\normf{X}^4/\sigma_r^3$ and $\varepsilon=\sqrt{\sigma_r/8}$ with probability at least $1-C\exp(-\Omega(n))$. Noting that $d(U_k) \le \sqrt{\frac{1}{8}\sigma_r}$,  Lemma \ref{itera} implies that
\begin{equation*}
 d(U_{k+1})\,\,\le\,\, \sqrt{1-\frac{2\mu\sigma_r}{\nu}}d(U_k)=\Big(1-\frac{2\mu\sigma_r}{7}\Big)^{1/2}d(U_k)
\end{equation*}
provided the step size
\begin{equation*}
  \mu \le \min\left( \frac{\nu}{2\sigma_r}, \frac{2}{\lambda\normf{X}^2}\right)=\frac{\sigma_r^3}{125\alpha^2\sigma_1\normf{X}^6}=\frac{\sigma_r^3}{c_2\sigma_1\normf{X}^6}.
\end{equation*}
\end{proof}

We remain to prove Lemma \ref{regularity condition}. To this end, we introduce one proposition and the full details can be found in the appendix.
\begin{prop}\label{pr:1}
Assume that $\normf{X}=1$ and that $ m \ge c_0\sigma_r^{-2}nr\log(c_1r/\sigma_r)$. Then with probability at least $1-C\exp(-\Omega(n))$, the followings hold for all matrices  $U \in \Rnr$ satisfying $d(U)\le \sqrt{\frac{\sigma_r}{8}}$:
\begin{eqnarray}
  (a) &\langle\nabla f_{\ex}(U),H\rangle &\ge 0.166\sigma_r\normf{H}^2+0.78\left(\tr^2(H^\T\Xb)+\normf{H^\T\Xb}^2\right) \label{eq:a}\qquad \\
  (b) &\frac{\sigma_r^2\normf{\nabla f_{\ex}(U)}^2}{3\alpha^2\left(\normf{H}^2+\normf{X}^2\right)}&\le 1.223\sigma_1\normf{H}^2+\tr^2(H^\T\Xb)+\normf{H^\T\Xb}^2 \label{eq:b}\qquad,
\end{eqnarray}
where $H=U-\Xb$ and $\Xb$ is defined in (\ref{eq:xb}).
\end{prop}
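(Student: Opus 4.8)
The plan is to prove both (a) and (b) by first passing to expectations under the Gaussian measure and then upgrading to bounds that hold uniformly over the neighborhood $\{U:d(U)\le\sqrt{\sigma_r/8}\}$. I fix $U$, write $\Xb=\Xb_U$ and $H=U-\Xb$, and exploit one algebraic identity throughout: since $\Xb\Xb^\T=XX^\T$ we have $\norm{a_i^\T\Xb}^2=y_i$, so setting $s_i:=\langle a_i^\T\Xb,a_i^\T H\rangle$ and $t_i:=\norm{a_i^\T H}^2$ gives $\norm{a_i^\T U}^2-y_i=2s_i+t_i$ and $\langle a_i^\T U,a_i^\T H\rangle=s_i+t_i$. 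Writing $\langle a_ia_i^\T U,H\rangle=\langle a_i^\T U,a_i^\T H\rangle$, the inner product in (a) becomes
\[
\langle\nabla f_{\ex}(U),H\rangle=\frac1m\sum_{i=1}^m(2s_i^2+3s_it_i+t_i^2)\,w_i,\qquad w_i=\exp\Big(-\frac{my_i}{\alpha\sum_k y_k}\Big).
\]
Note the cross term $3s_it_i$ can be negative, so the $i$-th summand need not be pointwise nonnegative; positivity is recovered only after averaging.

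My first reduction is to replace the data-dependent normalizer $\tfrac1m\sum_k y_k$ by its mean $\normf{X}^2=1$. Applying Lemma~\ref{lemma 3.1} to $M=XX^\T$ shows $\tfrac1m\sum_k y_k\in[1-\delta,1+\delta]$ on a high-probability event, so each $w_i$ is pinched between $\exp(-y_i/((1+\delta)\alpha))$ and $\exp(-y_i/((1-\delta)\alpha))$; this lets me work with the fixed weight $\exp(-y_i/\alpha)$ up to a multiplicative error absorbed into the constants. I then diagonalize $\Xb$: its columns are orthogonal with squared norms $\sigma_1,\dots,\sigma_r$, so extending $\{\bar x_j/\sqrt{\sigma_j}\}$ to an orthonormal basis of $\R^n$, the components of $a_i$ along $\mathrm{col}(X)$ are independent $\mathcal N(0,\sigma_j)$ and carry the \emph{entire} weight $w_i$, while the components orthogonal to $\mathrm{col}(X)$ are standard Gaussians independent of $w_i$. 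Integrating the orthogonal part first collapses $\E[(2s^2+3st+t^2)w]$ to an $r$-dimensional Gaussian integral, a dimension-free combination of $\sigma_r\normf{H}^2$, $\tr^2(H^\T\Xb)$ and $\normf{H^\T\Xb}^2$; the explicit one-dimensional integrals $\E[(a^{(j)})^{2k}e^{-\sigma_j(a^{(j)})^2/\alpha}]$ yield the constants $0.166$ and $0.78$ after lower bounding.

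The second, and I expect harder, half is to make these in-expectation identities hold \emph{uniformly} in $U$, and here the exponential weight is exactly what buys the optimal sampling rate. Because $0<w_i\le 1$ and $z^ke^{-z/\alpha}\le(k\alpha/e)^k$, the summands $(2s_i^2+3s_it_i+t_i^2)w_i$ are sub-exponential with a parameter governed by $\alpha$ rather than by a large fourth moment of $\norm{a_i}$, so the Bernstein inequality of Lemma~\ref{Bernstein inequality} gives concentration at a \emph{fixed} $U$ with $m=O(nr)$ samples, without the spurious $\log n$ factor that afflicts the unweighted gradient. I would then place an $\epsilon$-net on $\{U:d(U)\le\sqrt{\sigma_r/8}\}$, a set of effective dimension $nr$, union-bound the Bernstein estimate over the net, and extend to all $U$ using the Lipschitz dependence of both sides on $U$ (again finite thanks to $w_i\le 1$). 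The net cardinality and radius contribute the factor $\log(c_1r/\sigma_r)$, giving the stated requirement $m\ge c_0\sigma_r^{-2}nr\log(c_1r/\sigma_r)$.

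Part (b) follows along the same lines applied to $\normf{\nabla f_{\ex}(U)}^2=\sup_{\normf{V}=1}\langle\nabla f_{\ex}(U),V\rangle^2$. The only new ingredient is that the polynomial-times-exponential bounds $y_iw_i\le\alpha/e$ and $y_i^2w_i^2\lesssim\alpha^2$ turn the two weight factors into the $\alpha^2$ in the denominator, while $\sigma_r^{-2}$ reflects the smallest Gaussian variance along $\mathrm{col}(X)$; upper bounding the same $r$-dimensional integrals produces the constant $1.223$. The principal obstacle is the uniform step: one must verify that the sub-exponential norms and the Lipschitz constants are \emph{genuinely} dimension-free after the exponential truncation, so that the covering argument does not reintroduce a $\log n$ factor. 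Controlling the rare event that some $y_i$ is atypically large — where $w_i$ must offset a large quadratic — is the crux, and it is precisely the boundedness $w_i\le 1$ together with the Gaussian tail of $y_i$ that makes this succeed.
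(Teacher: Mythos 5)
Your overall skeleton coincides with the paper's: the same expansion of $\langle\nabla f_{\ex}(U),H\rangle$ into $\frac1m\sum_i(2s_i^2+3s_it_i+t_i^2)w_i$ (the paper's $\gamma^2$, cross term, and $\beta^2$), the same pinching of the data-dependent weight between $\exp(-y_i/((1\pm\delta)\alpha))$ via Lemma \ref{lemma 3.1}, the same diagonalization along the columns of $X$ to compute the weighted Gaussian expectations (this is the paper's Claim \ref{claim1}, which is where $0.78$ comes from, with $0.166$ emerging after subtracting the quartic term using $\normf{H}^2\le\sigma_r/8$), and the same Bernstein-plus-$\epsilon$-net uniformization. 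So far, so good.

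The genuine gap is your concentration step, and it sits exactly at what you call the crux. You assert that because $0<w_i\le1$ and $z^ke^{-z/\alpha}\le(k\alpha/e)^k$, the summands $(2s_i^2+3s_it_i+t_i^2)w_i$ are sub-exponential with parameter governed by $\alpha$. That inference requires the variable in the exponent to be the same one multiplying it, and here it is not: $w_i$ depends only on $y_i=\norm{X^\T a_i}^2$, i.e.\ on the projection of $a_i$ onto the column space of $X$, while $t_i=a_i^\T HH^\T a_i$ also involves the orthogonal components of $a_i$ --- and $H=U-\Xb$ can be entirely orthogonal to that column space (take $r=1$, $U=x+\epsilon v$ with $v\perp x$, so $\Xb=x$ and $H=\epsilon v$). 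In that case $w_i$ is independent of $t_i$, and $t_i^2w_i$ is a fourth power of a Gaussian, i.e.\ only $\psi_{1/2}$, not sub-exponential; Lemma \ref{Bernstein inequality} then cannot give tails $e^{-\Omega(nr)}$ at fixed $U$ with $m=O(nr)$, and the net argument collapses. Indeed the uniform bound you need is simply false: choosing $v$ proportional to the projection of $a_1$ onto the orthogonal complement of the column space of $X$ gives $\frac1m\sum_i(v^\T a_i)^4w_i\gtrsim n^2/m\gg 1$ for $m\asymp nr$, so the weighted quartic sum cannot be bounded by $3\normf{H}^4$ uniformly. Your diagnosis of the danger is also inverted: large $y_i$ is the easy case (the weight kills it); the problem is small $y_i$ with a large orthogonal component of $a_i$. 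Note that the term $s_i^2w_i$ \emph{is} fine, since $s_i^2\le y_it_i$ by Cauchy--Schwarz and $y_iw_i\lesssim\alpha$, so it is sub-exponential with norm $O(\alpha\normf{H}^2)$ --- this is exactly the paper's Lemma \ref{lemma one}. But for the quartic term the paper does something your proposal has no substitute for: it proves a separate uniform bound (Lemma \ref{lemma 2}) for the self-damped quantity $\frac1m\sum_i t_i^2\exp(-\lambda t_i/\normf{H}^2)$, whose summands are genuinely bounded, and then reduces $\rho_{i,\alpha}$ to that form by a pointwise domination of $t_i/\normf{H}^2$ by $y_i/\sigma_r$ (Corollary \ref{ahhalow}); the cross term is then handled by Cauchy--Schwarz on the empirical sums rather than in expectation. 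Any correct completion of your argument needs such a pointwise mechanism forcing the exponent to dominate $t_i$ (and, as the counterexample above shows, establishing that domination for directions orthogonal to the column space of $X$ is the real difficulty, not a technicality), so as written the proposal does not go through.
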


 Now, we can give the proof of Lemma \ref{regularity condition}.

\begin{proof}[Proof of Lemma \ref{regularity condition} ]
In order to prove Lemma \ref{regularity condition}, we only need to consider the case where $\normf{X}=1$. For any $0<\gamma<1$,  multiplying $\gamma\sigma_r/\sigma_1$ on both sides of  (\ref{eq:b}) we have
\begin{eqnarray*}
  \frac{\gamma\sigma_r^3\normf{\nabla f_{\ex}(U)}^2}{3\alpha^2\sigma_1\left(\normf{H}^2+\normf{X}^2\right)}
   &\le& 1.223\gamma\sigma_r\normf{H}^2+\gamma\sigma_r\tr^2(H^\T\Xb)/\sigma_1+\gamma\sigma_r\normf{H^\T\Xb}^2/\sigma_1.
\end{eqnarray*}
Note that $\sigma_r\leq 1$.
Taking $\gamma=0.166/12.23$ and then combining with $(\ref{eq:a})$, we obtain
\begin{align}
  \langle\nabla f_{\ex}(U),H\rangle
  & \ge 0.1494\sigma_r\normf{H}^2+\frac{\sigma_r^3\normf{\nabla f_{\ex}(U)}^2}{222\alpha^2\sigma_1\left(\normf{H}^2+\normf{X}^2\right)}\nonumber \\
  & \ge 0.1494\sigma_r\normf{H}^2+\frac{\sigma_r^3}{250\alpha^2\sigma_1\normf{X}^2}\normf{\nabla f_{\ex}(U)}^2, \nonumber
\end{align}
where we use $\normf{H}^2\le\frac{1}{8}\sigma_r\le \frac{1}{8}\normf{X}^2$ in the last line. Thus we have
\[\langle\nabla f_{\ex}(U),H\rangle\ge \frac{1}{\nu}\sigma_r\normf{H}^2+\frac{1}{\lambda\normf{X}^2}\normf{\nabla f_{\ex}(U)}^2 \]
for $\nu\ge 7$ and $\lambda\ge 250 \alpha^2\sigma_1/\sigma_r^3 $ with probability at least $1-C\exp(-\Omega(n))$, if $m\ge c_0\sigma_r^{-2}nr\log(c_1r/\sigma_r)$.
\end{proof}

\section{Numerical Experiments}
The purpose of the numerical experiments is the comparison for the exponential-type gradient descent algorithm with the gradient descent algorithm \cite{thelocal}. In our numerical experiments, the target matrix $X\in \Rnr$ is chosen randomly in standard normal distribution and the measurement vector $a_i,i=1,\ldots,m$ are generated by Gaussian random measurements.

\begin{example} \label{example1}
In this example, we test the success rate of the exponential-type gradient descent algorithm with different parameter $\alpha$. Let $X\in \Rnr$ with $n=200,r=2$, the parameter $\alpha_y=9$ in spectral initialization and the step size $\mu=0.1\cdot m/\sum_{i=1}^my_i$.  We consider the performance with $\alpha=20$ and $100$. The maximum number of iterations is $T=3000$. For the number of measurements, we vary $m$ within the range $ [nr,4nr] $. For each $m$, we run 100 times trials and calculate the success rate. We consider a trial to be successful when the relative error is less than $10^{-5}$ and the relative error is defined as
\[   \minm{O\in \mathcal{O}(r)}\frac{\normf{XO-U^t}}{\normf{X}}=\frac{\normf{XZV^\T-U^t}}{\normf{X}}, \]
where $ZDV^\T$ is the singular value decomposition of $X^\T U^t$. Figure \ref{figure:1} shows the numerical results for exponential-type gradient descent and gradient descent algorithm. The figure shows that exponential-type gradient descent algorithm achieve $100\%$ recovery rate if $m\ge 4nr$ and the empirical success rate is better than the gradient descent algorithm.
\end{example}
\begin{figure}[H]
\centering
\includegraphics[width=0.45\textwidth]{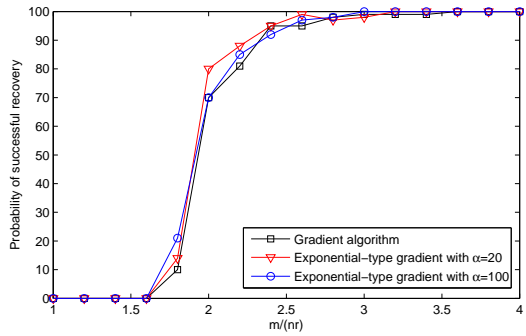}
\caption{Empirical success rate versus $m/nr$ for $X\in \Rnr$ with $n=200$, $r=2$.}
\label{figure:1}
\end{figure}

\begin{example}
In this example, we test the convergence and robustness of the exponential-type gradient descent algorithm.  We use noiseless model for (a) to test the convergence and use the noise model for (b) to test the robustness. The noise model is described as $y_i=a_i^\T XX^\T a_i+\epsilon_i$ where
the noise  $\epsilon_i \sim \mathcal{N}(0,0.1^2),\; i=1,\ldots,m$.
 Let $X\in \Rnr$ with $n=200,r=2$, the parameter $\alpha_y=9$ in spectral initialization and the step size $\mu=0.1\cdot m/\sum_{i=1}^my_i$.  We consider the performance with $\alpha=20$ and $100$. We set the number of measurements $m=3nr$. Figure \ref{figure:2} depicts the relative error against the iteration number. From the figure, we observe that our exponential-type gradient descent algorithm can converge to the exact solution and is robust with noisy measurements.
\end{example}
\begin{figure}[H]
\centering
\subfigure[]{
     \includegraphics[width=0.45\textwidth]{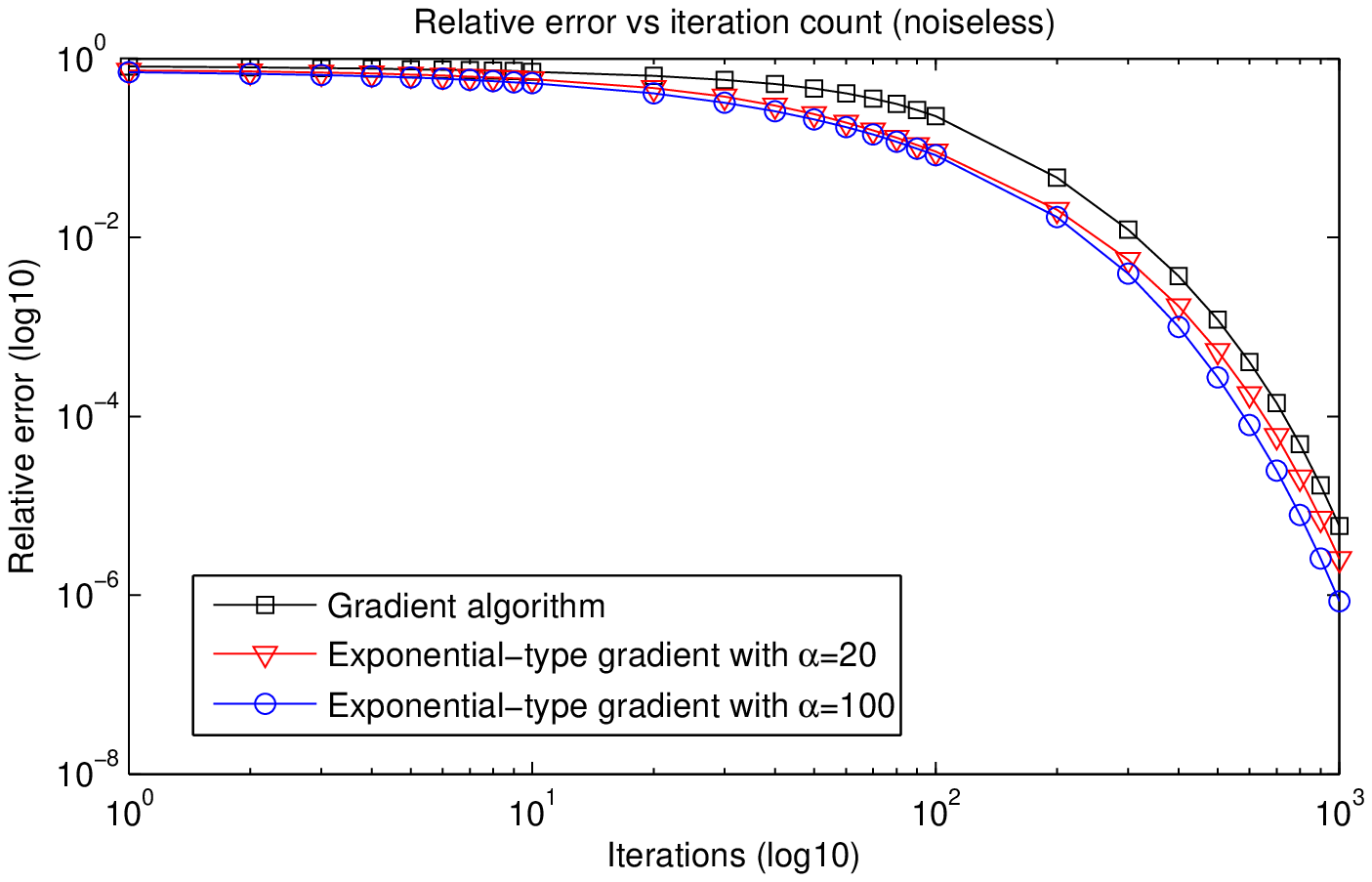}}
\subfigure[]{
     \includegraphics[width=0.45\textwidth]{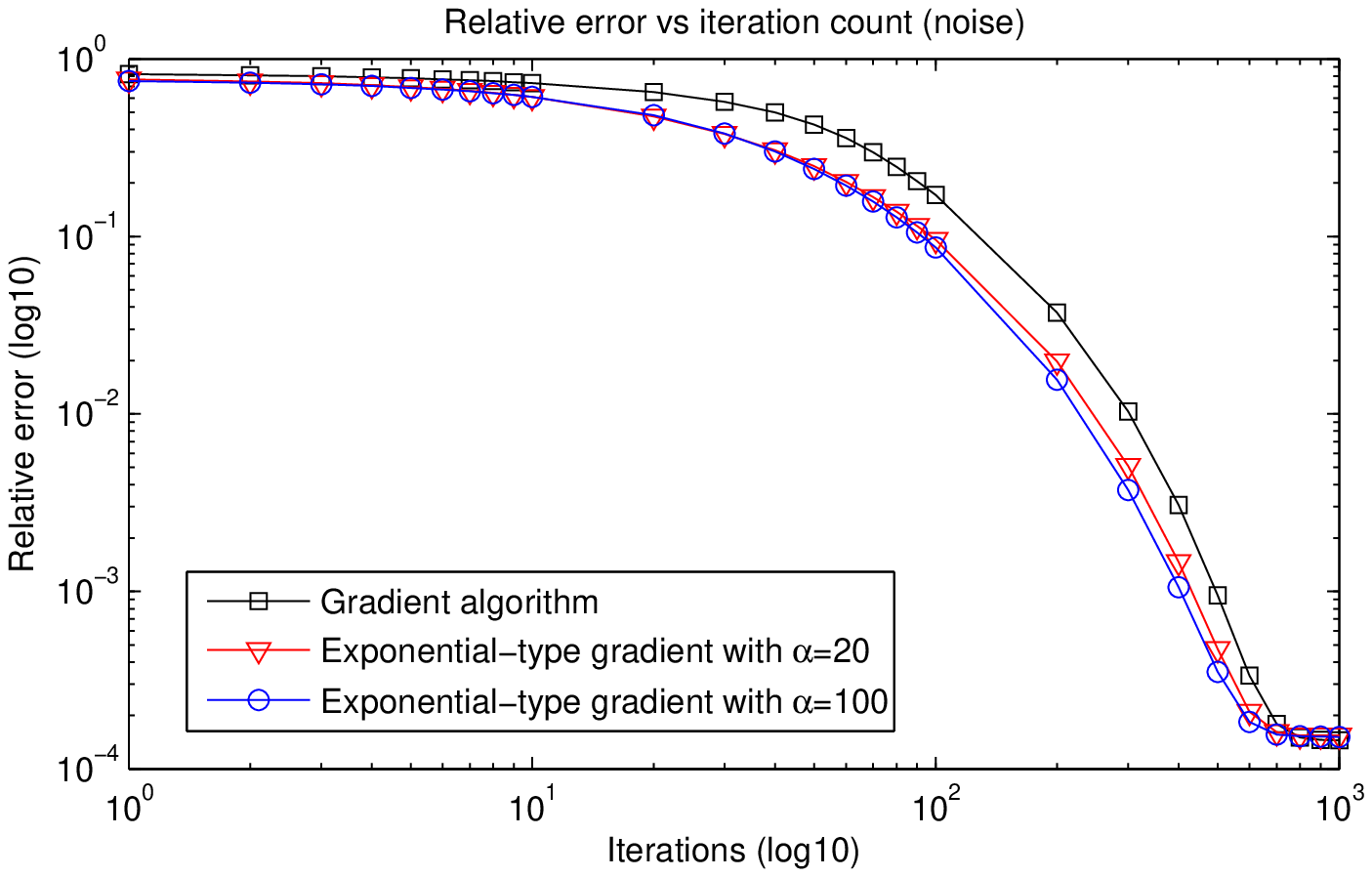}}
\caption{ Relative error with respect to noiseless and noise, where the unknown matrix $X\in \Rnr$ with $n=200$, $r=2$ and $m=3nr$.}
\label{figure:2}
\end{figure}

\begin{figure}[H]
\centering
\subfigure[]{
     \includegraphics[width=0.45\textwidth]{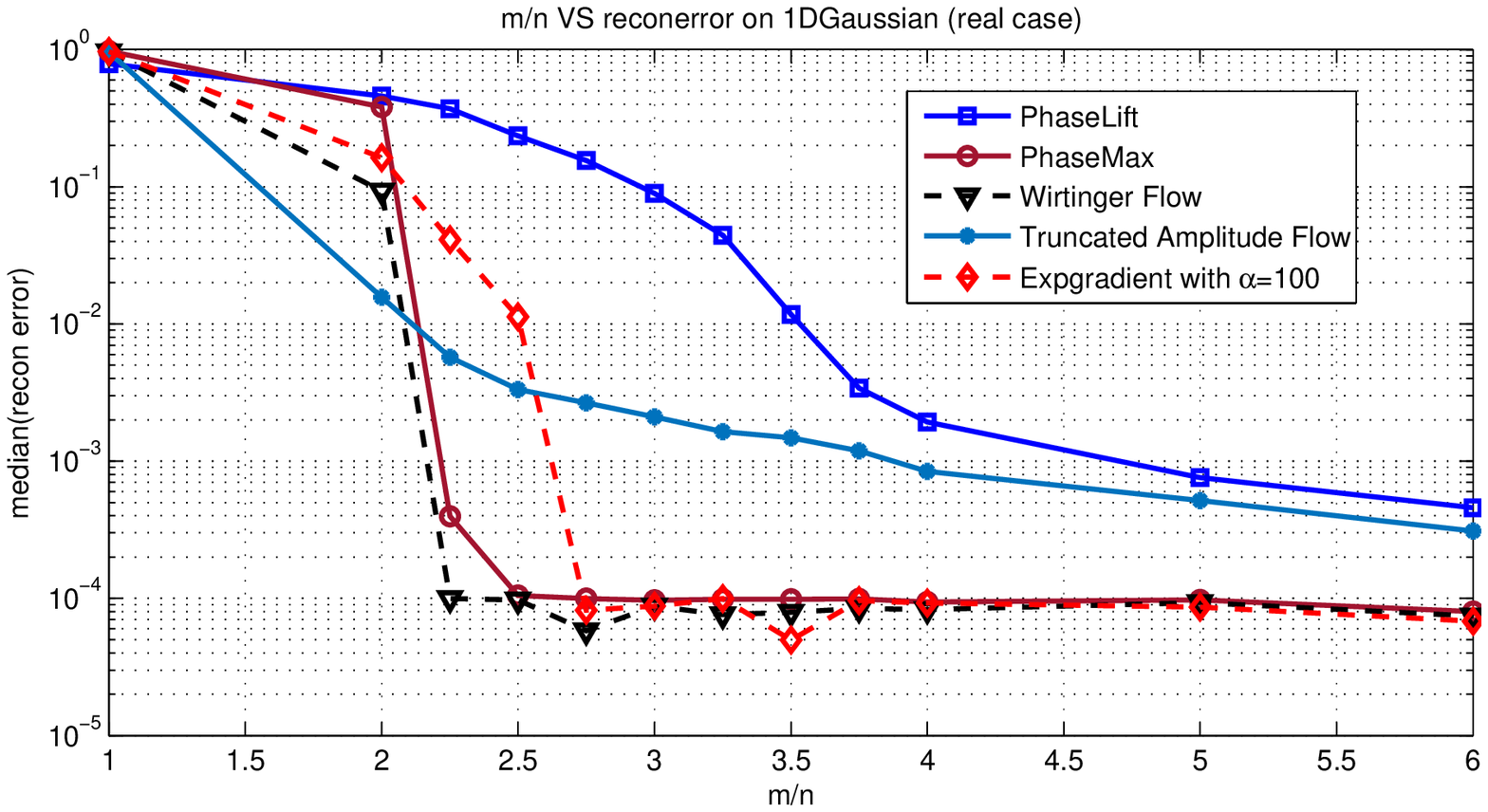}}
\subfigure[]{
     \includegraphics[width=0.45\textwidth]{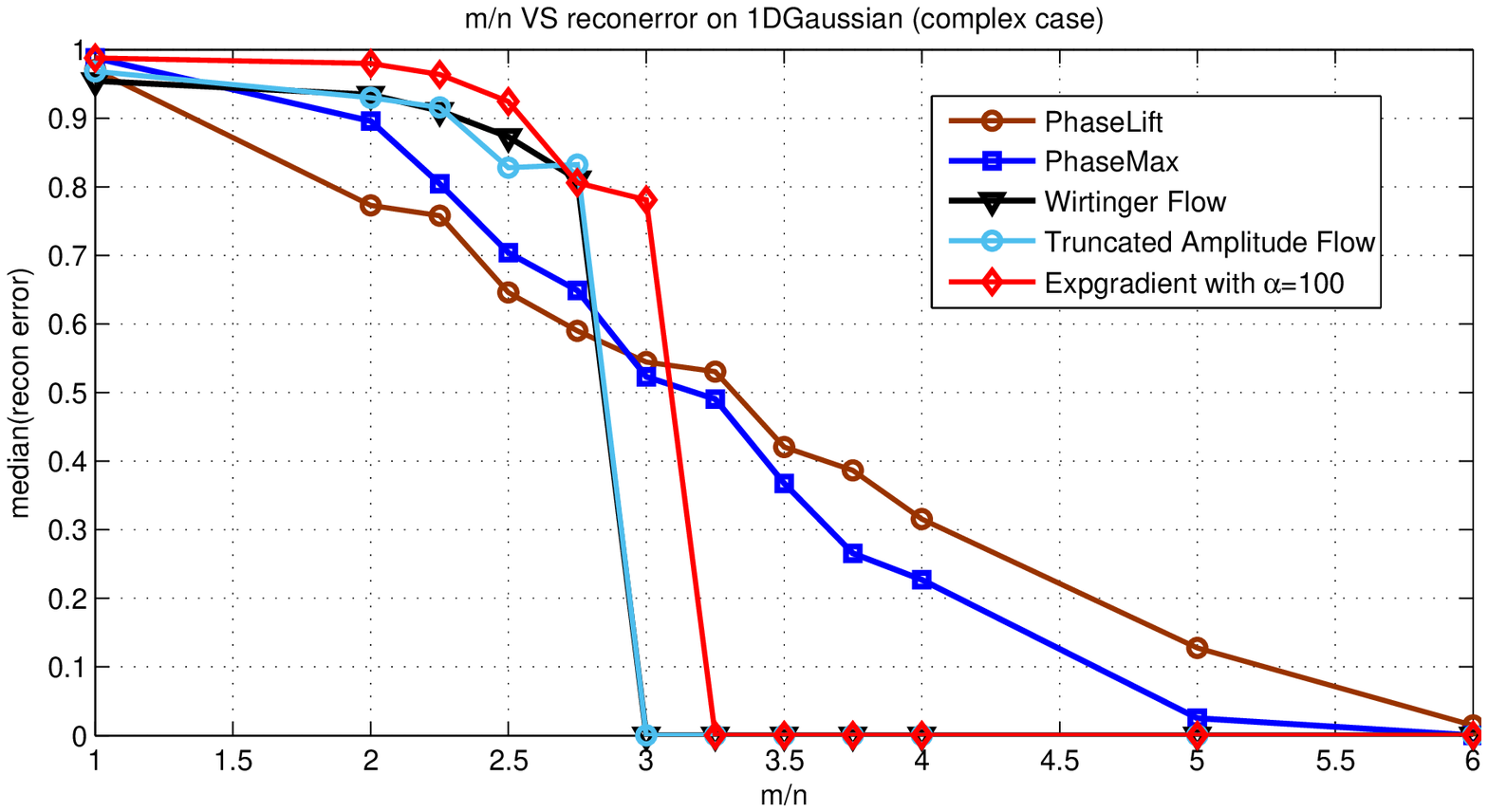}}
\caption{ Relative error versus $m/n$ for real and complex signals $x$ with dimension $n=100$.}
\label{figure3}
\end{figure}
\begin{example}
Finally, we test the performance of the exponential-type gradient descent algorithm to recover $X\in \R^{n\times r}$ with  $r=1$. As stated before, under the setting of $r=1$,
the  (\ref{question}) is reduced to phase retrieval problem. One already develops many algorithms
to solve phase retrieval problems, such as PhaseLift \cite{phaselift}, PhaseMax \cite{goldstein2018phasemax},  WirtFlow \cite{WF}, and TAF \cite{wang2017solving}.
The aim of  numerical experiments  is to compare the performance of the exponential-type gradient descent algorithm with that of other existing methods for phase retrieval as mentioned above.
This experiment is done by Phasepack \cite{phasepack} which is a algorithm package for solving the phase retrieval problem.
 For the exponential-type gradient algorithm, we choose the parameter $\alpha_y=9$ and $\alpha=100$ as the comparison.
We choose a random real signal $x\in \R^n$ in (a) and a random complex signal $x\in \mathbb{C}^n$ in (b) with $n=100$. Here, one can use the elegant formulation of Wirtinger derivatives \cite{WF} to obtain the exponential-type gradient for complex signal. We show the relative error in the reconstructed signal as a function of the the number of measurements $m$, where $m$ within the ranges $[n,6n]$. The results are shown in Figure \ref{figure3}. From the figure, we can see that our algorithm performs well comparing with state-of-the-art phase retrieval algorithms.
\end{example}

\section{Appendix}
\subsection{Proof of Theorem \ref{initial theorem}}
\begin{proof}
By homogeneity, it suffices to consider the case where $\normf{X}=1$. We assume that $X=(x_1,\ldots, x_r) \in \Rnr$ has orthogonal columns satisfying  $ \norm{x_1}\geq \cdots \geq \norm{x_r}$. Recall that $\sigma_1 \geq \sigma_2\geq \cdots \geq \sigma_r >0$ are the nonzero eigenvalues of the positive semidefinite matrix $XX^\T $ and then
\[
\sigma_j=\norm{x_j}^2, \quad \text{ for } \; 1\le j\le r.
\]
From Lemma \ref{lemma 3.1}, for  $\varepsilon>0$,  we have
\begin{equation} \label{average sum y_i}
\frac{1}{m}\sum_{k=1}^ma_k^\top XX^\top a_k = \frac{1}{m}\sum_{k=1}^my_k \in [1- \varepsilon, 1+\varepsilon],
\end{equation}
with probability at least $1-2\exp(-\Omega(n))$,
if $m\ge Cn$ where $C$ is a constant depending on $\varepsilon$. Here, we use the fact that
$\|XX^\top\|_*=\|X\|_F^2=1$.
The (\ref{average sum y_i}) implies that
\begin{equation}\label{eq:jie1}
\1_{\{y_i \le (1-\varepsilon)\alpha_y\}}\le\1_{\{y_i \le \frac{\alpha_y}{m}\sum_{k=1}^{m} y_k\}}\le \1_{\{y_i \le (1+\varepsilon)\alpha_y\}}.
\end{equation}
Recall that $Y=\frac{1}{m}\sum_{i=1}^{m}y_ia_ia_i^\T \1_{\{y_i \le \frac{\alpha_y}{m}\sum_{k=1}^{m} y_k\}}$. The (\ref{eq:jie1}) implies that
\begin{equation}  \label{the interval of Y}
Y_2\preceq Y \preceq Y_1
\end{equation}
holds with  high probability
where
\begin{eqnarray*}
Y_2:=\frac{1}{m}\sum_{i=1}^{m}y_ia_ia_i^\T \1_{\{y_i \le (1-\varepsilon)\alpha_y\}},\quad Y_1:=\frac{1}{m}\sum_{i=1}^{m}y_ia_ia_i^\T \1_{\{y_i \le (1+\varepsilon)\alpha_y\}}.
\end{eqnarray*}
We claim the following results:
\begin{claim}\label{claim of estimate Y1}
 For any $0<\delta<1$, if $\alpha_y \ge C\sqrt{\log(cr\sigma_1/\delta)}$, then
 \begin{equation} \label{estimate of expectation}
  \norm{\E Y_1-2XX^\T-I}\le \delta,\qquad \norm{\E Y_2-2XX^\T-I}\le \delta.
\end{equation}
\end{claim}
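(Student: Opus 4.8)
The plan is to treat $\E Y_1$ and $\E Y_2$ simultaneously by writing the common threshold as $\tau$, where $\tau=(1+\varepsilon)\alpha_y$ for $Y_1$ and $\tau=(1-\varepsilon)\alpha_y$ for $Y_2$. Since the $a_i$ are i.i.d.\ and each summand has the same law, $\E Y_j=\E\big[y\,aa^\T\,\1_{\{y\le\tau\}}\big]$ for a single Gaussian vector $a\sim\mathcal{N}(0,I)$ with $y=a^\T XX^\T a$. Note that this claim is purely a statement about an expectation, so no probabilistic concentration is needed; the randomness was already spent in deriving $Y_2\preceq Y\preceq Y_1$ via Lemma~\ref{lemma 3.1}. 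The natural move is to diagonalize in the eigenbasis of $XX^\T$: take an orthonormal eigenbasis $u_1,\dots,u_n$ with eigenvalues $\sigma_1\ge\cdots\ge\sigma_r>0=\sigma_{r+1}=\cdots$, and set $\xi_j=u_j^\T a$, so the $\xi_j$ are i.i.d.\ standard normal, $y=\sum_{j=1}^r\sigma_j\xi_j^2$, and the $(k,l)$ entry of $\E Y_j$ in this basis is $\E[y\,\xi_k\xi_l\,\1_{\{y\le\tau\}}]$. Here the crucial normalization, coming from $\normf{X}=1$, is $\sum_{j}\sigma_j=\tr(XX^\T)=1$ (in particular $\sigma_1\le 1$).

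First I would record the exact untruncated identity $\E[y\,aa^\T]=I+2XX^\T$. In the eigenbasis this is an elementary Gaussian moment computation: for $k\ne l$ the integrand is odd in one variable and vanishes, for $k>r$ the factor $\xi_k^2$ is independent of $y$ so the diagonal entry is $\E[y]=\sum_j\sigma_j=1$, and for $k\le r$ one gets $\E[y\xi_k^2]=3\sigma_k+\sum_{j\ne k}\sigma_j=1+2\sigma_k$ using $\E[\xi_k^4]=3$. The key structural observation is that \emph{these same parity and independence arguments survive verbatim with the factor $\1_{\{y\le\tau\}}$ inserted}, because that indicator depends only on $\xi_1,\dots,\xi_r$ and is even in each of them. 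Hence $\E Y_j$ is diagonal in the $u$-basis, exactly like $I+2XX^\T$, so $\E Y_j-(I+2XX^\T)$ is diagonal and its operator norm equals the largest absolute diagonal entry. Each such entry is a truncation remainder: $\E[y\,\1_{\{y>\tau\}}]$ for the directions $k>r$, and $\E[y\,\xi_k^2\,\1_{\{y>\tau\}}]$ for $k\le r$.

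It therefore remains to show that every remainder is at most $\delta$ once $\alpha_y$ (hence $\tau$) is large enough, and this tail estimate is the main obstacle. The clean route is Cauchy--Schwarz, $\E[y\,\xi_k^2\,\1_{\{y>\tau\}}]\le(\E[y^2\xi_k^4])^{1/2}\,(\PP(y>\tau))^{1/2}$, where $\E[y^2\xi_k^4]$ is an explicitly computable polynomial in the $\sigma_j$ that is controlled by $\sum_j\sigma_j=1$ (together with $\sigma_1$). For the tail I would use that $y=\sum_{j\le r}\sigma_j\xi_j^2$ is sub-exponential with $\E y=1$: a Chernoff/MGF bound with $\lambda=1/(4\sigma_1)$ gives $\E e^{\lambda y}=\prod_j(1-2\lambda\sigma_j)^{-1/2}\le e^{2\lambda}$, using $-\log(1-x)\le 2x$ on $[0,\tfrac12]$ together with $\sum_j\sigma_j=1$, so that $\PP(y>\tau)\le\exp\big(-(\tau-2)/(4\sigma_1)\big)$; Lemma~\ref{Bernstein inequality} could be invoked in place of this direct computation. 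Combining the two bounds controls each remainder by a fixed multiple of $\exp(-c\,\tau/\sigma_1)$, and since $\tau=(1\mp\varepsilon)\alpha_y$ with $\varepsilon$ a fixed constant, demanding this be below $\delta$ is exactly what produces the stated lower bound on $\alpha_y$. Taking the maximum over the diagonal entries and absorbing the polynomial prefactors (in $r$ and $\sigma_1$) into the logarithm yields $\norm{\E Y_j-2XX^\T-I}\le\delta$ in the claimed $\log(cr\sigma_1/\delta)$ form.

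The genuinely delicate point throughout is the correlation between the weight $y\xi_k^2$ and the truncation event $\{y>\tau\}$: these are not independent, and it is precisely this coupling that Cauchy--Schwarz decouples, at the cost of a factor $\tfrac12$ in the exponent. Everything else is bookkeeping — the exact fourth-moment identity, the diagonalization, and the constant-chasing needed to make the parameter dependence come out as stated — which I would relegate to routine calculation rather than grind through here.
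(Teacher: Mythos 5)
Your proposal follows essentially the same route as the paper's proof: rotate to the eigenbasis of $XX^\T$ (the paper writes $X=O(\norm{x_1}e_1,\ldots,\norm{x_r}e_r)$), record the exact untruncated identity $\E[y\,aa^\T]=2XX^\T+I$ (the paper's (\ref{eq:pr41})), use parity to see the truncated expectation is diagonal in this basis, and control the truncation remainder by Cauchy--Schwarz/H\"older times the tail probability of the sub-exponential variable $y=\sum_j\sigma_j\xi_j^2$ (the paper does this termwise in (\ref{eq:bernsteinfordetal}), citing Lemma \ref{Bernstein inequality}, where you compute the moment generating function directly; the two are interchangeable). The only organizational difference is immaterial: the paper gets the upper bound for free by dropping the indicator (the entrywise ordering (\ref{upper bound})) and needs the tail estimate only for the lower bound (\ref{lower bound}), whereas you bound the single remainder $\E[y\,\xi_k^2\,\1_{\{y>\tau\}}]$ for each diagonal entry; same content.

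The one place where your write-up does not deliver what it asserts is the final parameter dependence. Your (correct) Chernoff bound gives $\PP(y>\tau)\le\exp\bigl(-(\tau-2)/(4\sigma_1)\bigr)$, linear in $\tau$ in the exponent, so after Cauchy--Schwarz each remainder is of size $\exp(-c\,\alpha_y/\sigma_1)$, and forcing this below $\delta/r$ requires $\alpha_y\ge C\sigma_1\log(cr\sigma_1/\delta)$, hence (using $\sigma_1\le\normf{X}^2=1$) a threshold of order $\log(cr\sigma_1/\delta)$ --- not the claimed $C\sqrt{\log(cr\sigma_1/\delta)}$. No constant-chasing converts a linear-exponent tail into a $\sqrt{\log}$ threshold: since $y$ is sub-exponential rather than sub-gaussian, $\alpha_y= C\sqrt{\log(1/\delta)}$ with fixed $C$ fails for small $\delta$ when $\sigma_1$ is of constant order. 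To be fair, the paper's own proof has the same soft spot: in (\ref{eq:bernsteinfordetal}) it replaces the Bernstein minimum $\min\bigl(\alpha_y^2/\sum_k\sigma_k^2,\ \alpha_y/\sigma_1\bigr)$ by its quadratic branch to obtain $\exp(-C_0(1+\varepsilon)^2\alpha_y^2)$, but since $\sum_k\sigma_k^2\le\sigma_1$ the quadratic branch is the minimum only when $\alpha_y\lesssim 1$, which is outside the operating regime $\alpha_y\ge C\sqrt{\log(\cdot)}$. So your argument, like the paper's, actually establishes the claim with $\sqrt{\log}$ replaced by $\log$; you should state it in that weakened form (or flag the discrepancy) rather than asserting the stated bound follows, and the same logarithmic correction then propagates to the hypothesis $\alpha_y\ge C\sqrt{\log(c\kappa r)}$ of Theorem \ref{initial theorem}.
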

The (\ref{estimate of expectation}) implies that $\norm{\E Y_1}\ge 1+2\sigma_1-\delta$ and
$\norm{\E Y_2}\ge 1+2\sigma_1-\delta$.
 We can use  Lemma \ref{A introduction} to obtain that if $m\ge C\delta^{-2}(1+2\sigma_1-\delta)^{-2}n$, and then with probability at least $ 1-4\exp(-\Omega(n))$, we have
\begin{equation} \label{YEY}
 \norm{Y_1-\E Y_1} \le \delta, \qquad \norm{Y_2-\E Y_2} \le \delta,
\end{equation}
where $C$ is a  positive constant.
Indeed, in Lemma \ref{A introduction} we take the $i$-th row of $A$ as  $b_i^\T:=\sqrt{y_i}a_i^\T \1_{\{y_i \le (1+\varepsilon)\alpha_y\}}$ and set $\Sigma=\E Y_1$ with $\norm{\E Y_1}\ge 1+2\sigma_1-\delta$ and $t=\delta\norm{\E Y_1}\sqrt{m}$. Then we can  obtain $\norm{Y_1-\E Y_1} \le \delta$. Similarly, we have $\norm{Y_2-\E Y_2} \le \delta$ if we take the $i$-th row of $A$ as  $b_i^\T:=\sqrt{y_i}a_i^\T \1_{\{y_i \le (1-\varepsilon)\alpha_y\}}$ and set $\Sigma:=\E Y_2$.

Combining (\ref{the interval of Y}), (\ref{estimate of expectation}) and (\ref{YEY}), we have
\begin{equation}\label{approximate of Y}
 \norm{Y-2XX^\T-I}\le 2\delta
\end{equation}
with probability at least $1-6\exp(-\Omega(n))$ provided $m\ge C\delta^{-2}(1+2\sigma_1-\delta)^{-2}n$ and $\alpha_y \ge C\sqrt{\log(cr\sigma_1/\delta)}$. Furthermore, from Wely Theorem we have
\begin{equation}\label{wely}
|\lambda_{r+1}-1|\le 2\delta \quad \text{and} \quad |\lambda_{n}-1|\le 2\delta.
\end{equation}

Next, we turn to consider $d(U_0)$. Recall the definition $U_0=U\Sigma^{1/2}$ in Algorithm \ref{initialization1}. Here, $U=(u_1,\ldots,u_r)$ where $u_k$ is normalized eigenvectors corresponding to the eigenvalues $\lambda_k$ of $Y$ for $k=1,\ldots,r$, and the scaling of the diagonal matrix $\Sigma$ is given by $\Sigma_{i,i}=(\lambda_i-\lambda_{r+1})/2$.
Hence,
\begin{eqnarray*}
  \norm{U_0U_0^\T-XX^\T} &\le & \norm{U_0U_0^\T-\frac{1}{2}Y+\frac{1}{2}\lambda_{r+1}I}+\norm{\frac{1}{2}Y-\frac{1}{2}I-XX^\T}+\frac{1}{2}\norm{(\lambda_{r+1}-1)I} \\
   &\le& \frac{1}{2}(\lambda_{r+1}-\lambda_n)+\delta+\frac{1}{2}(\lambda_{r+1}-1) \\
   &\le& 4\delta ,
\end{eqnarray*}
where the second inequality follows from (\ref{approximate of Y}) and the last inequality follows from (\ref{wely}). Then, using the following fact ( see, e.g. the Initialization of \cite{zheng2015convergent})
\begin{eqnarray*}
  \mathop{\min} \limits_{O \in \mathcal{O}(r)} \|U_0-XO\|_F^2 &\le & \frac{\normf{U_0U_0^\T-XX^\T}^2}{(2\sqrt{2}-2)\sigma_r},
\end{eqnarray*}
and taking $\delta\le \frac{\sigma_r}{18\sqrt{r}}$, we obtain
\begin{eqnarray*}
  \mathop{\min} \limits_{O \in \mathcal{O}(r)} \|U_0-XO\|_F^2 &\le & \frac{2r\norm{U_0U_0^\T-XX^\T}^2}{(2\sqrt{2}-2)\sigma_r} \\
   &\le & \frac{32r\delta^2}{(2\sqrt{2}-2)\sigma_r} \\
   &\le& \frac{\sigma_r}{8},
\end{eqnarray*}
where we use $\normf{A}\le\sqrt{\rank(A)}\norm{A}$ in the first inequality. The choice of $\delta$ implies that the measurements $m\ge C\sigma_r^{-2}nr$ and $\alpha_y \ge C\sqrt{\log(c'\kappa r)}$, where $\kappa=\sigma_1/\sigma_r$ denotes the ratio of the largest to the smallest nonzero eigenvalues of matrix $XX^\T$.\\

We remain to prove  Claim \ref{claim of estimate Y1}.
There exists an orthogonal matrix $O\in \R^{r\times r}$ such that $X=O(\norm{x_1}e_1,\ldots,\norm{x_r}e_r)$. Then
\begin{eqnarray*}
  O^\T(\E Y_1-2XX^\T-I)O &=& O^\T \E Y_1 O-\left(2\sum_{k=1}^r\norm{x_k}^2e_ke_k^\T +I \right),
\end{eqnarray*}
and
\begin{eqnarray} \label{Y_1}
      O^\T \E Y_1 O &=& \E \left[\sum_{k=1}^r \norm{x_k}^2a_{i,k}^2a_ia_i^\T \1_{\{\sum_{k=1}^r \norm{x_k}^2a_{i,k}^2\le(1+\varepsilon)\alpha_y\}} \right].
 \end{eqnarray}
A simple calculation is that
\begin{equation}\label{eq:pr41}
  \E \left[\sum_{k=1}^r \norm{x_k}^2a_{i,k}^2a_ia_i^\T\right] = 2\sum_{k=1}^r\norm{x_k}^2e_ke_k^\T +I ,
\end{equation}
which implies that
\begin{equation} \label{upper bound}
O^\T \E Y_1 O \le 2\sum_{k=1}^r\norm{x_k}^2e_ke_k^\T +I,
\end{equation}
where we write $M_2\le M_1$ if all entries of $M_1-M_2$ are nonnegative. On the other hand, from (\ref{Y_1}) we obtain that
\begin{equation}\label{eq:pr42}
  O^\T \E Y_1 O = \E \left[\sum_{k=1}^r \norm{x_k}^2a_{i,k}^2a_ia_i^\T\right]-\E \left[\sum_{k=1}^r \norm{x_k}^2a_{i,k}^2a_ia_i^\T \1_{\{\sum_{k=1}^r \norm{x_k}^2a_{i,k}^2\ge(1+\varepsilon)\alpha_y\}} \right].
\end{equation}
For any $1\le j,l,k\le r$ and  $\delta>0$, by H\"{o}lder's inequality we have
\begin{equation}\label{eq:bernsteinfordetal}
\begin{aligned}
  & \E\left[\norm{x_k}^2a_{i,j}^2a_{i,l}^2 \1_{\{\sum_{k=1}^r \norm{x_k}^2a_{i,k}^2\ge(1+\varepsilon)\alpha_y\}} \right] \\
& \quad  \le  \norm{x_1}^2\sqrt{\E[a_{i,j}^4a_{i,l}^4]}\cdot \sqrt{\PP\left\{\sum_{k=1}^r \norm{x_k}^2a_{i,k}^2\ge(1+\varepsilon)\alpha_y\right\}} \\
   &\quad \le C_1\norm{x_1}^2\exp\left(-C_0\min\Big(\frac{(1+\varepsilon)^2\alpha_y^2}{\norm{x_1}^4
+\cdots+\norm{x_r}^4},\frac{(1+\varepsilon)\alpha_y}{\norm{x_1}^2}\Big)\right) \\
   &\quad \le C_1\sigma_1\exp\left(-C_0(1+\varepsilon)^2\alpha_y^2\right) \\
   &\quad \le \frac{\delta}{r}
\end{aligned}
\end{equation}
provided $\alpha_y \ge C\sqrt{\log(cr\sigma_1/\delta)}$, where  the second inequality  follows from Lemma \ref{Bernstein inequality} and  the third inequality  follows from the fact that $\normf{X}=1$ and $\norm{x_r}\le \cdots \le \norm{x_1}\le 1$. The (\ref{eq:bernsteinfordetal}) implies that
\begin{equation}\label{eq:pr43}
\E \left[\sum_{k=1}^r \norm{x_k}^2a_{i,k}^2a_ia_i^\T \1_{\{\sum_{k=1}^r \norm{x_k}^2a_{i,k}^2\ge(1+\varepsilon)\alpha_y\}} \right]\le \delta I.
\end{equation}
Thus, combining (\ref{eq:pr41}), (\ref{eq:pr42}) and (\ref{eq:pr43}) we have
\begin{eqnarray}\label{lower bound}
  O^\T \E Y_1 O &\ge 2\sum\limits_{k=1}^r\norm{x_k}^2e_ke_k^\T +(1-\delta)I.
\end{eqnarray}
Combining   (\ref{upper bound}) and (\ref{lower bound}) and noting that $O^\T \E Y_1 O$ is a diagonal matrix, we obtain
\begin{eqnarray*}
    \norm{\E Y_1-2XX^\T-I} &=& \norm{O^\T(\E Y_1-2XX^\T-I)O} \le \delta.
\end{eqnarray*}
Similarly, we can obtain $\norm{\E Y_2-2XX^\T-I}\le \delta$, which completes the proof.
\end{proof}

\subsection{Proof of Proposition \ref{pr:1} }
We always assume that $\normf{X}=1$ throughout the proof.
We set $H:=U-\Xb$ where $\Xb=\argmin{Z \in \mathcal{X}}\normf{U-Z}$ and $\mathcal{X}$ is the solution set. Then the exponential-type gradient can be rewritten as
\begin{equation}\label{regradient}
 \nabla f_{\ex}(U)=\frac{1}{m}\sum_{i=1}^m(a_i^\T HH^\T a_i+2a_i^\T H\Xb^\T a_i)(a_ia_i^\top H+a_ia_i^\T\Xb) \cdot \exp\left(-\frac{my_i}{\alpha\sum_{k=1}^my_k}\right).
\end{equation}
For convenience, we let
\begin{equation}\label{hi}
  \rho_{i,\alpha}\,\,:=\,\,\exp\big(-\frac{my_i}{\alpha\sum_{i=1}^my_i}\big),\;i=1,\ldots,m.
\end{equation}
To prove Proposition \ref{pr:1}, we need the following lemmas.

\begin{lemma} \label{lemma one}
For any fixed $\alpha\ge 20$ and $\delta>0$, if $ m\ge c_0\alpha^2\delta^{-2}nr\log(\sqrt{r}/\delta)$, then with probability at least $1-C\exp(-\Omega(\alpha^{-2}\delta^2m))$, the followings hold for all non-zero matrix $U \in \Rnr$:
\begin{eqnarray*}
  (a)&\frac{1}{m}\sum_{i=1}^m(a_i^\T H\Xb^\T a_i)^2 \rho_{i,\alpha}\ge& (0.78\sigma_r-2\delta)\normf{H}^2+0.78\tr^2(H^\T\Xb)+0.78\normf{H^\T\Xb}^2 \\
  (b)&\frac{1}{m}\sum_{i=1}^m(a_i^\T H\Xb^\T a_i)^2 \rho_{i,\alpha}\le& (\sigma_1+2\delta)\normf{H}^2+\tr^2(H^\T\Xb)+\normf{H^\T\Xb}^2,
\end{eqnarray*}
where $C,c_0$ are universal constants.
\end{lemma}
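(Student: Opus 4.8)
The plan is to reduce the weighted empirical sum to a Gaussian expectation and then concentrate, the only real complications being the data-dependent weight and the need for uniformity in $U$. First I would decouple $\rho_{i,\alpha}$ from the aggregate $\sum_{k=1}^m y_k$ appearing inside it. Since $\normf{X}=1$, Lemma~\ref{lemma 3.1} gives $\frac1m\sum_{k=1}^m y_k\in[1-\ep,1+\ep]$ on an event of probability $1-2\exp(-\Omega(m))$, and on that event
\[
\exp\Big(-\frac{y_i}{\alpha(1-\ep)}\Big)\,\le\,\rho_{i,\alpha}\,\le\,\exp\Big(-\frac{y_i}{\alpha(1+\ep)}\Big).
\]
This sandwiches the coupled weight between two \emph{clean} weights $\exp(-y_i/\alpha')$ with $\alpha'=\alpha(1\pm\ep)$, each a function of $a_i$ alone, so that after the replacement the summands $(a_i^\T H\Xb^\T a_i)^2\exp(-y_i/\alpha')$ become independent across $i$. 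Taking $\ep$ a small fixed constant keeps $\alpha'$ essentially $\ge 20$ and the distortion is absorbed into the $2\delta$ slack.

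Next I would evaluate the expectation of one clean summand for a fixed $U$. Setting $g:=\Xb^\T a$ and $h:=H^\T a$, these are jointly centered Gaussian with $\E[gg^\T]=\Xb^\T\Xb$, $\E[hh^\T]=H^\T H$, $\E[gh^\T]=\Xb^\T H$, while $a^\T H\Xb^\T a=\langle h,g\rangle$ and $y=\|g\|^2$; thus everything reduces to the $r$-dimensional integral $\E[\langle h,g\rangle^2\exp(-\|g\|^2/\alpha')]$. In both bounds I keep the clean weight (to preserve the sub-exponential structure used below) and differ only in how the expectation is estimated. For (b) I bound the weight by $1$ inside $\E$ and evaluate the unweighted fourth moment by Wick's formula, obtaining $\tr^2(H^\T\Xb)+\tr(H^\T H\,\Xb^\T\Xb)+\tr((H^\T\Xb)^2)$; the optimality of $\Xb$ forces $H^\T\Xb$ to be symmetric, so $\tr((H^\T\Xb)^2)=\normf{H^\T\Xb}^2$, and $\sigma_r I\preceq\Xb^\T\Xb\preceq\sigma_1 I$ converts the middle term into a coefficient between $\sigma_r$ and $\sigma_1$ times $\normf{H}^2$, giving the coefficients $\sigma_1,1,1$. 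For (a) the damping cannot be discarded; instead I would lower bound the weighted integral by $0.78$ times each unweighted term, and this is exactly where $\alpha\ge 20$ enters, guaranteeing the exponential damping shrinks each Wick term by a factor no smaller than $0.78$.

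With the clean weight in hand I would prove pointwise concentration about this mean and then make it uniform in $U$. The decisive feature is that the weight lowers the effective degree of the summand from four to two: since
\[
\langle h,g\rangle^2\exp(-\|g\|^2/\alpha')\,\le\,\|h\|^2\|g\|^2e^{-\|g\|^2/\alpha'}\,\le\,\frac{\alpha'}{e}\,\|H^\T a\|^2,
\]
each summand is dominated by a genuinely sub-exponential quadratic form with $\psi_1$-norm $\lesssim\alpha\normf{H}^2$, and Lemma~\ref{Bernstein inequality} with coefficients $1/m$ gives a deviation of order $\delta\normf{H}^2$ with failure probability $\exp(-\Omega(\alpha^{-2}\delta^2 m))$, matching the statement. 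To pass to all $U$ I use that both sides are homogeneous of degree two in $H$, reducing to $\normf{H}=1$ with $\Xb$ ranging over the compact orbit $\mathcal{X}$; this $O(nr)$-dimensional parameter set admits an $\eta$-net of cardinality $\exp(O(nr\log(1/\eta)))$, and a union bound together with a Lipschitz estimate for the quadratic form (valid on a high-probability event bounding the norms of the $a_i$) transfers the inequality off the net. Choosing $\eta\sim\delta/\sqrt r$ produces the factor $\log(\sqrt r/\delta)$ and the stated sample size $m\ge c_0\alpha^2\delta^{-2}nr\log(\sqrt r/\delta)$.

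The hardest part will be the exact evaluation and lower bounding of the weighted Gaussian moment $\E[\langle h,g\rangle^2\exp(-\|g\|^2/\alpha')]$: pinning down the constant $0.78$ and confirming that $\alpha\ge 20$ is the precise threshold at which it holds requires carefully tracking how the exponential damping affects each of the three Wick terms, since they depend differently on the alignment of $h$ with $g$ and hence are damped at different rates. A secondary difficulty is making the net-plus-Lipschitz argument quantitative enough to yield exactly the $\log(\sqrt r/\delta)$ granularity rather than a cruder logarithmic factor.
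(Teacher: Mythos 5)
Your proposal follows the paper's proof route essentially step for step: the same sandwich of $\rho_{i,\alpha}$ between clean weights $\exp\big(-y_i/(\alpha(1\pm\ep))\big)$ via Lemma \ref{lemma 3.1}, the same expectation-then-concentration structure, the same observation that the damping caps each summand by $O(\alpha)\|H^\T a_i\|^2$ and hence makes it sub-exponential with $\psi_1$-norm $O(\alpha\normf{H}^2)$, and the same $\epsilon$-net at resolution $\sim\delta/\sqrt{r}$ with a Lipschitz transfer (the paper does the transfer through the nuclear-norm bound in Lemma \ref{lemma 3.1} rather than by conditioning on $\max_i\norm{a_i}$; your netting over the pair $(H,\Xb)$ is if anything more careful than the paper's net over $H$ alone). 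Your Wick-formula evaluation of the unweighted moment is a cleaner packaging of the paper's coordinate computation and yields the identical three terms.

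The gap is in the one step you defer, and it is not merely a computation left to the reader: the claim that ``the exponential damping shrinks each Wick term by a factor no smaller than $0.78$'' is false as stated, for every value of $\alpha$. Working in the paper's coordinates, with $t_s=\norms{x_s}\htt_{s,s}$ and $\omega_s=1+\norms{x_s}^2/(0.495\alpha)$, the weighted analogue of the term $\tr^2(H^\T\Xb)$ is $\frac{1}{\gamma}\big(\sum_s t_s/\omega_s\big)^2$; when the diagonal of $H^\T\Xb$ has mixed signs and the $\omega_s$ are unequal (i.e.\ the columns of $X$ have unequal norms), this can vanish identically while $\tr^2(H^\T\Xb)=\big(\sum_s t_s\big)^2>0$ --- take $r=2$, $H^\T\Xb=\diag(t_1,t_2)$ with $t_2=-t_1\omega_2/\omega_1$ --- so no term-by-term comparison can give a positive damping constant. (The paper's own proof has the same blemish: its inequality (\ref{inequality:2}) reverses when $\htt_{s,s}\htt_{k,k}+\htt_{s,k}\htt_{k,s}<0$, which does occur; only the summed statement, part 3 of Claim \ref{claim1}, is true.) The inequality has to be established for the aggregate, with the deficit in the trace-square term absorbed by the surplus of the other terms, which in exactly these unbalanced configurations are damped by strictly less than the worst-case factor; a two-dimensional computation in the example above gives an aggregate ratio of about $0.7816$ at $\alpha=20$, so the constant $0.78$ survives by only a few thousandths and this compensation argument is genuinely needed. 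Your plan is sound in all other respects, but part (a) cannot be closed by the term-wise damping you describe.
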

\begin{proof}
Suppose for the moment that $H$ is independent from $a_i$. By homogeneity, it suffices to establish the claim for the case $\normf{H}=1$. From (\ref{average sum y_i}) we have
\begin{equation}\label{ei}
\exp\Big(-\frac{a_i^\T XX^\T a_i}{0.99\alpha}\Big)\le  \rho_{i,\alpha}\le \exp\Big(-\frac{a_i^\T XX^\T a_i}{1.01\alpha}\Big)
\end{equation}
with high probability. For convenience,  we set
\begin{equation}\label{li}
  \barrho:=\exp\Big(-\frac{a_i^\T \Xb\Xb^\T a_i}{0.99\alpha}\Big),\;i=1,\ldots,m.
\end{equation}
Noting that $a_i^\T \Xb\Xb^\T a_i= a_i^\T XX^\T a_i$, we have
 \begin{equation}\label{eq:1le52}
 \frac{1}{m}\sum_{i=1}^m(a_i^\T H\Xb^\T a_i)^2\rho_{i,\alpha}\ge \frac{1}{m}\sum_{i=1}^m(a_i^\T H\Xb^\T a_i)^2\barrho.
\end{equation}
We claim the following results:
\begin{claim}\label{claim1}
For any fixed parameter $\alpha\ge 20$ it holds
\begin{itemize}
  \item[1)] $\E\left[(a_i^\T H\Xb^\T a_i)^2\right]\ge\sigma_r\normf{H}^2+\tr^2(H^\T\Xb)+\normf{H^\T\Xb}^2$\vspace{1ex}
  \item[2)] $\E\left[(a_i^\T H\Xb^\T a_i)^2\right]\le\sigma_1\normf{H}^2+\tr^2(H^\T\Xb)+\normf{H^\T\Xb}^2$\vspace{1ex}
  \item[3)] $\E\left[(a_i^\T H\Xb^\T a_i)^2\barrho\right]\ge 0.78\E\left[(a_i^\T H\Xb^\T a_i)^2\right]$.\vspace{1ex}
\end{itemize}
\end{claim}
Then combining 3) and 1) we obtain that
\begin{eqnarray*}
    \E\left[(a_i^\T H\Xb^\T a_i)^2\barrho\right] &\ge& 0.78\sigma_r\normf{H}^2+0.78\tr^2(H^\T\Xb)+0.78\normf{H^\T\Xb}^2.
\end{eqnarray*}
Since
\begin{eqnarray*}
  (a_i^\T H\Xb^\T a_i)^2\barrho &\le & (a_i^\T \Xb\Xb^\T a_i)\barrho(a_i^\T HH^\T a_i)
\end{eqnarray*}
 and $(a_i^\T \Xb\Xb^\T a_i)\barrho$ is bounded, it means that $(a_i^\T H\Xb^\T a_i)^2\barrho$ is a sub-exponential random variable with $\psi_1$ norm $O(\alpha\normf{H}^2)$. We can use Lemma \ref{Bernstein inequality} to obtain that
\begin{equation}\label{eq:2le52}
\begin{aligned}
    \frac{1}{m}\sum_{i=1}^m(a_i^\T H\Xb^\T a_i)^2\barrho &\ge \E\big[(a_i^\T H\Xb^\T a_i)^2\barrho\big]-\delta\normf{H}^2 \\
   &\ge (0.78\sigma_r-\delta)\normf{H}^2+0.78\tr^2(H^\T\Xb)+0.78\normf{H^\T\Xb}^2
\end{aligned}
\end{equation}
holds with probability at least $1-\exp(-\Omega(\alpha^{-2}\delta^2m))$ where $\delta>0$. Combining (\ref{eq:1le52}) and (\ref{eq:2le52}), we obtain that (a) holds for a fixed $H\in \R^{n\times r}$.

 We construct an $\epsilon$-net $\mathcal{N}_\epsilon\subset \R^{n\times r}$ with cardinality $|\mathcal{N}_\epsilon|\le (1+\frac{2}{\epsilon})^{nr}$ such that for any $H\in\Rnr$ with $\normf{H}=1$, there exists $ H_0\in \mathcal{N}_\epsilon$ satisfying $\normf{H-H_0}\le \epsilon $. Taking a union bound over this set gives that
\begin{eqnarray*}
 \frac{1}{m}\sum_{i=1}^m(a_i^\T H_0\Xb^\T a_i)^2\barrho &\ge& (0.78\sigma_r-\delta)\normf{H_0}^2+0.78\tr^2(H_0^\T\Xb)+0.78\normf{H_0^\T\Xb}^2
\end{eqnarray*}
holds for all $H_0 \in \mathcal{N}_\epsilon$ with probability at least $1-(1+\frac{2}{\epsilon})^{nr}\exp(-\Omega(\alpha^{-2}\delta^2m))$. \\
Note that $\barrho<1$ for all $i$. Then there exists a universal constant $c_1>0$ such that
\begin{eqnarray}
    \left|\frac{1}{m}\sum_{i=1}^m (a_i^\T H\Xb^\T a_i)^2\barrho-\frac{1}{m}\sum_{i=1}^m (a_i^\T H_0\Xb^\T a_i)^2\barrho\right|
   &\le & \frac{1}{m}\sum_{i=1}^m\left|a_i^\T H\Xb^\T a_i-a_i^\T H_0\Xb^\T a_i\right|\nonumber \\
   &\le & c_1\|HX^\T-H_0X^\T\|_*  \nonumber\\
   &\le& c_1\sqrt{r}\normf{H-H_0} \nonumber\\
   & \le& c_1\sqrt{r}\epsilon  \label{invoking epsilon}
\end{eqnarray}
where we use Lemma \ref{lemma 3.1} in the second line, the fact $\|A\|_*\le \sqrt{\rank(A)}\normf{A}$ in the third line. Indeed, according to Lemma \ref{lemma 3.1}, for any $\delta\in(0,1)$, if $m\ge c_0\delta^{-2}n$, then with probability at least $1-C\exp(-\Omega(n))$ we have
\[ \frac{1}{m}\sum\limits_{i=1}^m\left|a_i^\T HX^\T a_i-a_i^\T H_0X^\T a_i\right| \le (1+\delta)\|HX^\T-H_0X^\T\|_* \le c_1\|HX^\T-H_0X^\T\|_*.
\]
By choosing $\epsilon=\frac{\delta}{c_1\sqrt{r}}$ in (\ref{invoking epsilon}), we conclude the first part of lemma.\\

We now turn to the part (b). The (\ref{ei}) implies that
\[
 \rho_{i,\alpha}\le \exp\big(-\frac{a_i^\T XX^\T a_i}{1.01\alpha}\big)
\]
holds with high probability.
It gives that
\begin{eqnarray*}
 \frac{1}{m}\sum_{i=1}^m(a_i^\T H\Xb^\T a_i)^2\rho_{i,\alpha} &\le & \frac{1}{m}\sum_{i=1}^m(a_i^\T H\Xb^\T a_i)^2\exp\left(-\frac{a_i^\T XX^\T a_i}{1.01\alpha}\right).
\end{eqnarray*}
From Claim \ref{claim1}, we have
\begin{eqnarray*}
 \E\left[(a_i^\T H\Xb^\T a_i)^2\exp\left(-\frac{a_i^\T XX^\T a_i}{1.01\alpha}\right)\right]&\le & \sigma_1\normf{H}^2+\tr^2(H^\T\Xb)+\normf{H^\T\Xb}^2.
\end{eqnarray*}
Similarly, $(a_i^\T H\Xb^\T a_i)^2\exp\big(-\frac{a_i^\T XX^\T a_i}{1.01\alpha}\big)$ is a sub-exponential random variable with sub-exponential norm $O(\alpha\normf{H}^2)$.
Then, we can employ the  method for proving part (a) to prove part (b).
\end{proof}

\begin{lemma}  \label{lemma 2}
For a fixed $\lambda>0$, for any $H\in \Rnr$ and $\delta>0$, if $m\ge c_0\delta^{-2}\lambda^{-2}nr\log(\sqrt{r}/(\delta\lambda))$, then with probability at least $1-C\exp(-\Omega(\delta^2\lambda^2m))$, we have
\[ \frac{1}{m}\sum_{i=1}^m(a_i^\T HH^\T a_i)^2\exp\left(-\lambda\frac{a_i^\T HH^\T a_i}{\normf{H}^2}\right)
\le 2\normf{HH^\T}^2+(2\delta+1)\normf{H}^4.
\]
Here, $c_0,C$ are some universal constants.
\end{lemma}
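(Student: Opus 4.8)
The plan is to mirror the template of Lemma~\ref{lemma one}: reduce to the unit sphere, compute the expectation, obtain concentration for a fixed $H$ through the Bernstein bound of Lemma~\ref{Bernstein inequality}, and finally upgrade to a statement uniform in $H$ by an $\epsilon$-net argument. Since both sides are homogeneous of degree four in $H$ while the exponent $\lambda\,a_i^\T HH^\T a_i/\normf{H}^2$ is scale invariant, I may assume $\normf{H}=1$ and set $M:=HH^\T$, so that $\tr M=1$ and the left-hand side equals $F(H):=\frac1m\sum_{i=1}^m g(a_i^\T M a_i)$ with $g(t):=t^2e^{-\lambda t}$. The two analytic facts I will use about $g$ are $t\,e^{-\lambda t}\le 1/(\lambda e)$ and $\sup_{s\ge0}\abs{g'(s)}=O(1/\lambda)$; both follow at once from the substitution $u=\lambda s$, which turns $g'(s)=s\,e^{-\lambda s}(2-\lambda s)$ into $\frac1\lambda u(2-u)e^{-u}$, a bounded function of $u\ge0$.

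For the expectation, the bound $e^{-\lambda t}\le1$ and the Gaussian fourth-moment identity give $\E\,g(a_i^\T M a_i)\le \E(a_i^\T M a_i)^2=(\tr M)^2+2\normf{M}^2=1+2\normf{HH^\T}^2$. For concentration at a fixed $H$, the inequality $g(t)\le t/(\lambda e)$ yields $0\le g(a_i^\T M a_i)\le \frac{1}{\lambda e}\,a_i^\T M a_i$, so $g(a_i^\T M a_i)$ is sub-exponential with $\psi_1$-norm $O(\tr(M)/\lambda)=O(1/\lambda)$. Applying Lemma~\ref{Bernstein inequality} to $\frac1m\sum_i\big(g(a_i^\T M a_i)-\E\,g(a_i^\T M a_i)\big)$ with the weight vector $\frac1m(1,\dots,1)$ then gives, in the regime $\delta\lambda\lesssim1$, that $F(H)\le 1+2\normf{HH^\T}^2+\delta$ with probability at least $1-2\exp(-\Omega(\delta^2\lambda^2 m))$.

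To make this uniform I build an $\epsilon$-net $\mathcal N_\epsilon$ of the unit Frobenius sphere in $\Rnr$ with $\abs{\mathcal N_\epsilon}\le(1+2/\epsilon)^{nr}$ and take a union bound. The delicate point, and the main obstacle, is bounding $\abs{F(H)-F(H_0)}$ for $\normf{H-H_0}\le\epsilon$: here the exponential weight itself depends on $H$, unlike the fixed weight $\barrho$ exploited in Lemma~\ref{lemma one}, so the damping no longer factors out. I resolve this through the Lipschitz estimate $\abs{g(t)-g(t')}\le \sup_s\abs{g'(s)}\,\abs{t-t'}=O(\lambda^{-1})\abs{t-t'}$, which reduces the difference to $\frac1m\sum_i\abs{a_i^\T(HH^\T-H_0H_0^\T)a_i}$; by the upper estimate of Lemma~\ref{lemma 3.1} (used exactly as in (\ref{invoking epsilon})) this is at most $C\|HH^\T-H_0H_0^\T\|_*\le C\sqrt{2r}\,\normf{HH^\T-H_0H_0^\T}\le C'\sqrt{r}\,\epsilon$, the last step following from $HH^\T-H_0H_0^\T=H(H-H_0)^\T+(H-H_0)H_0^\T$ together with $\normf{H}=\normf{H_0}=1$. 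Hence $\abs{F(H)-F(H_0)}\le C''\sqrt{r}\,\epsilon/\lambda$, and the choice $\epsilon=\Theta(\delta\lambda/\sqrt r)$ makes this at most $\delta$ while contributing the factor $\log(\sqrt r/(\delta\lambda))$ to the required number of measurements. Combining the net estimate with this perturbation bound, and absorbing the $O(\epsilon)$ gap between $\normf{H_0H_0^\T}^2$ and $\normf{HH^\T}^2$ into the error terms, gives $F(H)\le 1+2\normf{HH^\T}^2+2\delta$ for all $H$ on the sphere with probability at least $1-C\exp(-\Omega(\delta^2\lambda^2 m))$, which after undoing the normalization is precisely the claimed bound $2\normf{HH^\T}^2+(2\delta+1)\normf{H}^4$.
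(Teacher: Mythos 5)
Your proposal is correct and follows essentially the same route as the paper's own proof: reduce to $\normf{H}=1$, bound the expectation by dropping the exponential factor, get concentration for fixed $H$ from the sub-exponential bound of order $O(1/\lambda)$ via Lemma \ref{Bernstein inequality}, and make the bound uniform with an $\epsilon$-net of size $(1+2/\epsilon)^{nr}$, controlling $\abs{F(H)-F(H_0)}$ through the Lipschitz property of $x^2e^{-\lambda x}$ together with the upper estimate of Lemma \ref{lemma 3.1} and the choice $\epsilon=\Theta(\delta\lambda/\sqrt{r})$. The only (cosmetic) difference is that you use the sharp Lipschitz constant $O(1/\lambda)$ where the paper states $O(1/\lambda^2)$, which is an overestimate valid only in the regime $\lambda\le 1$ where the lemma is eventually applied, so your version is in fact slightly cleaner.
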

\begin{proof}
Without loss of generality, we only need to prove the lemma in the case $\normf{H}=1$. It is straightforward to show that
\begin{equation*}
  \E\left[(a_i^\T HH^\T a_i)^2\exp\left(-\lambda a_i^\T HH^\T a_i\right)\right] \le \E\left[(a_i^\T HH^\T a_i)^2\right]=2\normf{HH^\T}^2+\normf{H}^4.
\end{equation*}
Observe that $(a_i^\T HH^\T a_i)^2\exp\big(-\lambda a_i^\T HH^\T a_i\big)$ is a sub-exponential random variable with sub-exponential norm $O(1/\lambda\cdot\normf{H}^2)$. According to Lemma \ref{Bernstein inequality} we have
\begin{equation*}
 \frac{1}{m}\sum_{i=1}^m(a_i^\T HH^\T a_i)^2\exp\left(-\lambda a_i^\T HH^\T a_i\right)\le 2\normf{HH^\T}^2+\normf{H}^4+\frac{\delta_0}{\lambda}\normf{H}^2
\end{equation*}
with probability $1-\exp(-\Omega(\delta_0^2m))$. We next construct an $\epsilon$-net $\mathcal{N}_\epsilon$ with $|\mathcal{N}_\epsilon|\le (1+\frac{2}{\epsilon})^{nr}$ such that for any $H\in\Rnr$ with $\normf{H}=1$, there exists $H_0\in \mathcal{N}_\epsilon$ satisfying $\normf{H-H_0}\le \epsilon$. Since $x^2e^{-\lambda x}$ is Lipschitz function with Lipschitz constant $O(1/\lambda^2)$, we have
\[ \begin{array}{l}
     \Big|\frac{1}{m}\sum_{i=1}^m(a_i^\T HH^\T a_i)^2\exp\Big(-\lambda a_i^\T HH^\T a_i \Big)-\frac{1}{m}\sum_{i=1}^m(a_i^\T H_0H_0^\T a_i)^2\exp\Big(-\lambda a_i^\T H_0H_0^\T a_i\Big)\Big|  \vspace{1ex} \\
     \le \frac{1}{\lambda^2m}\sum_{i=1}^m\Big|a_i^\T HH^\T a_i-a_i^\T H_0H_0^\T a_i\Big| \vspace{1ex} \\
     \le \frac{c_2\sqrt{r}\epsilon}{\lambda^2}
   \end{array}
\]
where the last inequality follows from Lemma \ref{lemma 3.1}.  By choosing $\epsilon=\frac{\delta_0\lambda}{c_2\sqrt{r}}$, we obtain
\begin{equation*}
 \frac{1}{m}\sum_{i=1}^m(a_i^\T HH^\T a_i)^2\exp\left(-\lambda a_i^\T HH^\T a_i\right)\le 2\normf{HH^\T}^2+\normf{H}^4+\frac{2\delta_0}{\lambda}\normf{H}^2
\end{equation*}
with probability at least $1-\exp(-\Omega(\delta_0^2m))$ if $m\ge c_0\delta_0^{-2}nr\log(\sqrt{r}/(\delta_0\lambda))$. Finally, noting that $\normf{H}=1$ and taking $\delta_0=\lambda\delta$, we arrive at the conclusion.
\end{proof}
\begin{corollary} \label{ahhalow}
For any $\delta>0$, $U\in \Rnr$ and $H=U-\Xb$, if $m\ge c_0\alpha^2\delta^{-2}\sigma_r^{-2}nr\log(\alpha\sqrt{r}/(\delta\sigma_r))$, then with probability at least $1-C\exp(-\Omega(n))$, it holds
\begin{equation*}
  \frac{1}{m}\sum_{i=1}^m(a_i^\T HH^\T a_i)^2\rho_{i,\alpha}\le 2\normf{HH^\T}^2+(2\delta+1)\normf{H}^4.
\end{equation*}
\end{corollary}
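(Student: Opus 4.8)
The plan is to recognize Corollary~\ref{ahhalow} as an instance of Lemma~\ref{lemma 2}. By homogeneity I would first reduce to the normalization $\normf{X}=1$, exactly as in the proofs of Lemma~\ref{lemma 2} and Proposition~\ref{pr:1}. The decisive observation is that the right-hand side of the asserted bound, $2\normf{HH^\T}^2+(2\delta+1)\normf{H}^4$, is literally the conclusion of Lemma~\ref{lemma 2}; moreover the sample complexity in the corollary, $m\ge c_0\alpha^2\delta^{-2}\sigma_r^{-2}nr\log(\alpha\sqrt{r}/(\delta\sigma_r))$, is precisely the hypothesis $m\ge c_0\delta^{-2}\lambda^{-2}nr\log(\sqrt{r}/(\delta\lambda))$ of Lemma~\ref{lemma 2} under the choice $\lambda\asymp\sigma_r/\alpha$. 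So the entire content of the corollary reduces to dominating the data-dependent weight $\rho_{i,\alpha}$ by the weight $\exp(-\lambda\, a_i^\T HH^\T a_i/\normf{H}^2)$ used in Lemma~\ref{lemma 2}.

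First I would clean up $\rho_{i,\alpha}$. Using (\ref{average sum y_i}) (equivalently the left inequality in (\ref{ei})), on an event of probability at least $1-C\exp(-\Omega(n))$ one has $\frac1m\sum_{k=1}^m y_k\le 1.01$, and hence $\rho_{i,\alpha}\le\exp(-\frac{y_i}{1.01\alpha})=\exp(-\frac{a_i^\T XX^\T a_i}{1.01\alpha})$. With this in hand the target is a comparison of the form $\rho_{i,\alpha}\le\exp(-\lambda\,a_i^\T HH^\T a_i/\normf{H}^2)$ with $\lambda=\sigma_r/(1.01\alpha)$, after which Lemma~\ref{lemma 2} would apply verbatim: its conclusion gives the stated estimate, and its success probability $1-C\exp(-\Omega(\delta^2\lambda^2m))$ becomes $1-C\exp(-\Omega(n))$ for the stated $m$, with the convention $\delta_0=\lambda\delta$ matching the two sample-complexity expressions.

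The hard part is exactly this weight comparison, and it is where I expect the real work to lie. The weight $\rho_{i,\alpha}$ decays in the quadratic form $a_i^\T XX^\T a_i$, which only senses the component of $a_i$ in the range of $X$, whereas the Lemma~\ref{lemma 2} weight decays in $a_i^\T HH^\T a_i$, governed by the range of $H$. These subspaces need not agree: since $\Xb$ is the projection of $U$ onto $\mathcal{X}$, the residual $H=U-\Xb$ may carry mass orthogonal to the range of $X$ (indeed $X^\T H=0$ is compatible with optimality of $\Xb$). Along such directions $a_i^\T HH^\T a_i$ can be large while $\rho_{i,\alpha}\approx1$, so a pointwise domination is false and $(a_i^\T HH^\T a_i)^2\rho_{i,\alpha}$ is genuinely only sub-Weibull of order $1/2$ rather than sub-exponential there.

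To handle this I would let $P_X$ denote the orthogonal projection onto the range of $X$, set $P_X^\perp=I-P_X$, and split $a_i$ accordingly so that $a_i^\T XX^\T a_i$ and the two pieces of $a_i^\T HH^\T a_i$ partially decouple. On the in-range part I would use $a_i^\T XX^\T a_i\ge\sigma_r\norm{P_X a_i}^2$ to transfer the decay of $\rho_{i,\alpha}$ onto the corresponding portion of $a_i^\T HH^\T a_i$, recovering the factor $\exp(-\lambda\,\cdot)$ with $\lambda\asymp\sigma_r/\alpha$ needed for Lemma~\ref{lemma 2}; on the out-of-range part, where the weight is essentially useless, I would control $\frac1m\sum_i(a_i^\T P_X^\perp HH^\T P_X^\perp a_i)^2$ directly through its mean $\normf{HH^\T}^2+\normf{H}^4$ together with the Lipschitz/$\epsilon$-net step of Lemma~\ref{lemma 2}, exploiting that the $\normf{HH^\T}^2$ term on the right-hand side is precisely large enough to absorb this contribution. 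Assembling the two pieces by Cauchy--Schwarz should reproduce the claimed bound and probability; the central obstacle, and the place where the heavy tail resists an $O(nr\log)$ sample count, is exactly the bookkeeping of the component of $H$ orthogonal to the column space of $X$, which I would expect to consume most of the proof.
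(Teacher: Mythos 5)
Your reduction is, up to the decisive step, exactly the paper's: normalize $\normf{X}=1$, use (\ref{ei}) to get $\rho_{i,\alpha}\le\exp\big(-\frac{a_i^\T XX^\T a_i}{1.01\alpha}\big)$, and invoke Lemma \ref{lemma 2} with $\lambda=\sigma_r/(1.01\alpha)$, matching the sample complexity via $\delta_0=\lambda\delta$. But at the weight comparison you and the paper part ways. The paper's entire proof of that step is the pointwise chain (\ref{eq:budeng1})--(\ref{eq:budeng3}): it asserts ``since $\sigma_r$ is the smallest eigenvalue of $XX^\T$'' that $y_i=a_i^\T XX^\T a_i\ge\sigma_r\norms{a_i}^2$, hence $\norms{a_i}^2\le y_i/\sigma_r$, and combined with $a_i^\T HH^\T a_i\le\normf{H}^2\norms{a_i}^2$ this yields $\rho_{i,\alpha}\le\exp\big(-\frac{\sigma_r}{1.01\alpha}\cdot\frac{a_i^\T HH^\T a_i}{\normf{H}^2}\big)$ for \emph{every} $H$ --- no projection splitting, three lines, done. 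That is precisely the pointwise domination you declared false, and your objection is substantively correct when $r<n$: $\sigma_r$ is the smallest \emph{nonzero} eigenvalue of the rank-$r$ matrix $XX^\T$, so $y_i\ge\sigma_r\norms{a_i}^2$ fails for $a_i$ with a component in $\ker(X^\T)$; the paper's inequality is valid only if one reads $XX^\T\succeq\sigma_r I$, i.e., effectively $r=n$ or $a_i$ restricted to the range of $X$. So rather than reproducing the paper's argument, you located its fragile point.

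However, your proposed repair does not close, and the gap sits exactly where you filed it under ``bookkeeping.'' On the range-orthogonal piece the weight is of constant order ($y_i$ depends only on $P_Xa_i$, so $\rho_{i,\alpha}$ is typically $\exp(-O(1)/\alpha)$ and carries no decay in $P_X^\perp a_i$), leaving the essentially unweighted process $\frac1m\sum_i(a_i^\T P_X^\perp HH^\T P_X^\perp a_i)^2$. Its summands are only sub-Weibull of order $1/2$, as you yourself observe, so Lemma \ref{Bernstein inequality} does not apply, and the ``mean plus $\epsilon$-net'' mechanism of Lemma \ref{lemma 2} --- which uses the exponential weight both to get a sub-exponential $\psi_1$ norm and to get the Lipschitz bound $x^2e^{-\lambda x}$ --- is unavailable. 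The failure is quantitative, not cosmetic: take $H$ with a single nonzero column $t\,P_X^\perp a_1/\norm{P_X^\perp a_1}$ (consistent with $H=U-\Xb$ and $d(U)\le\sqrt{\sigma_r/8}$, since a perturbation orthogonal to the range of $X$ does not change the optimal orthogonal factor); then $\frac1m\sum_i(a_i^\T HH^\T a_i)^2\rho_{i,\alpha}\gtrsim t^4(n-r)^2e^{-c/\alpha}/m$ against a target of order $t^4$, so the uniform-in-$U$ control that Proposition \ref{pr:1} consumes would force $m\gtrsim n^2$, not $m=O(nr\log(\cdot))$. In short: your plan coincides with the paper's proof verbatim wherever the paper's pointwise bound is granted (e.g., under $XX^\T\succeq\sigma_r I$), but your alternative route for the general case runs into a genuine heavy-tail obstruction on $\ker(X^\T)$ that Cauchy--Schwarz and a net cannot absorb, so as written the proposal does not prove the corollary.
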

\begin{proof}
Since $\sigma_r$ is the smallest eigenvalue of $XX^\T$, we have
\begin{equation*}
y_i=a_i^\T XX^\T a_i\ge \sigma_r\norms{a_i}^2,
\end{equation*}
which implies that
\begin{equation}\label{eq:budeng1}
\norms{a_i}^2\le \frac{a_i^\T XX^\T a_i}{\sigma_r}=\frac{y_i}{\sigma_r}.
\end{equation}
On the other hand,  we have
\begin{equation}\label{eq:budeng2}
a_i^\T HH^\T a_i\leq \|H\|_F^2\|a_i\|^2.
\end{equation}
Combining (\ref{eq:budeng1}) and (\ref{eq:budeng2}), we obtain that
\begin{equation}\label{eq:budeng3}
y_i\geq \sigma_r \frac{a_i^\T HH^\T a_i}{\|H\|_F^2}.
\end{equation}
According to  (\ref{ei}) and (\ref{eq:budeng3}), we obtain that
\begin{eqnarray*}
   \frac{1}{m}\sum_{i=1}^m(a_i^\T HH^\T a_i)^2\rho_{i,\alpha} &\le& \frac{1}{m}\sum_{i=1}^m(a_i^\T HH^\T a_i)^2\exp\left(-\frac{\sigma_r}{1.01\alpha}\cdot\frac{a_i^\T HH^\T a_i}{\normf{H}^2}\right).
\end{eqnarray*}
We take $\lambda=\frac{\sigma_r}{1.01\alpha}$ in Lemma \ref{lemma 2}  and arrive at the conclusion.
\end{proof}

\begin{proof}[Proof of Proposition \ref{pr:1} ]
To state conveniently, we set
\begin{equation*}
\beta^2 =\frac{1}{m}\sum_{i=1}^m(a_i^\T HH^\T a_i)^2 \rho_{i,\alpha},\quad
\gamma^2=\frac{2}{m}\sum_{i=1}^m(a_i^\T H\Xb^\T a_i)^2 \rho_{i,\alpha}.
\end{equation*}
According to the expression of exponential-type gradient (\ref{regradient}), we have
\begin{equation}
\begin{aligned}
  \langle\nabla f_{\ex}(U),H\rangle \nonumber
   &=\beta^2+\gamma^2+\frac{3}{m}\sum\limits_{i=1}^m(a_i^\T H\Xb^\T a_i)(a_i^\T HH^\T a_i)\rho_{i,\alpha}\nonumber \\
   &\ge  \beta^2+\gamma^2-\frac{3}{m}\sqrt{\sum\limits_{i=1}^m(a_i^\T H\Xb^\T a_i)^2 \rho_{i,\alpha}}\cdot
  \sqrt{\sum\limits_{i=1}^m(a_i^\T HH^\T a_i)^2 \rho_{i,\alpha}} \nonumber\\
   &= \beta^2+\gamma^2-\frac{3}{\sqrt{2}} \beta \gamma=\Big(\gamma-\frac{3}{2\sqrt{2}}\beta\Big)^2-\frac{1}{8}\beta^2 \nonumber\\
   &\ge  \Big(\frac{\gamma^2}{2}-\frac{9}{8}\beta^2\Big)-\frac{1}{8}\beta^2=\frac{\gamma^2}{2}-\frac{5}{4}\beta^2 \nonumber\\
   &= \frac{1}{m}\sum_{i=1}^m(a_i^\T H\Xb^\T a_i)^2 \rho_{i,\alpha}
  -\frac{5}{4m}\sum_{i=1}^m(a_i^\T HH^\T a_i)^2 \rho_{i,\alpha} \nonumber\\
   &\ge  (0.78\sigma_r-2\delta_1)\normf{H}^2+0.78\tr^2(H^\T\Xb)+0.78\normf{H^\T\Xb}^2-\frac{5}{2}\normf{HH^\T}^2-\frac{5(2\delta_2+1)}{4}\normf{H}^4 \nonumber\\
   &\ge  \left(0.78\sigma_r-2\delta_1-\frac{5(2\delta_2+3)}{4}\normf{H}^2\right)\normf{H}^2+0.78\left(\tr^2(H^\T\Xb)+\normf{H^\T\Xb}^2\right)  \label{choice of delta_1 delta_2}
\end{aligned}
\end{equation}
where we use Cauchy-Schwarz inequality in the second line, the inequality $(\gamma-\beta)^2\ge\frac{\gamma^2}{2}-\beta^2$ in the fourth line, Lemma \ref{lemma one} and Corollary \ref{ahhalow} in the sixth line, and the fact that $\normf{HH^\T}\le \normf{H}^2$ in the last line.
Note that $\normf{H}^2=\|U-\Xb\|_F^2=d(U)^2\le\frac{1}{8}\sigma_r$.
 Taking $\delta_1\le \frac{1}{16}\sigma_r$ and $\delta_2\le \frac{1}{16}$, we obtain that
\begin{equation*}
\langle\nabla f_{\ex}(U),H\rangle\ge 0.166\sigma_r\normf{H}^2+0.78\left(\tr^2(H^\T\Xb)+\normf{H^\T\Xb}^2\right)
\end{equation*}
with probability at least $1-C\exp(-\Omega(n)$, if $m\ge c_0\sigma_r^{-2}nr\log(c_1r/\sigma_r)$. This implies the part $(a)$.\\

Next, we turn to the part $(b)$. We consider
\[
\normf{\nabla f_{\ex}(U)}^2=\maxm{\normf{W}=1, W\in \R^{n\times r}}\abs{\langle\nabla f_{\ex}(U),W\rangle}^2
\]
on the case where $H=U-\Xb\le\sqrt{\frac{1}{8}\sigma_r}$. Recall the notation $\rho_{i,\alpha}$ in formula (\ref{hi}), and we have
\begin{equation*}
\begin{aligned}
  &\abs{\langle\nabla f_{\ex}(U),W\rangle}^2\\
   &= \left(\frac{1}{m}\sum_{i=1}^m(a_i^\T HH^\T a_i)(a_i^\T HW^\T a_i)\rho_{i,\alpha}+\frac{2}{m}\sum_{i=1}^m(a_i^\T H\Xb^\T a_i)(a_i^\T HW^\T a_i)\rho_{i,\alpha} \right.\\
   &\quad \left.+\frac{1}{m}\sum_{i=1}^m(a_i^\T HH^\T a_i)(a_i^\T \Xb W^\T a_i)\rho_{i,\alpha}+\frac{2}{m}\sum_{i=1}^m(a_i^\T H\Xb^\T a_i)(a_i^\T \Xb W^\T a_i)\rho_{i,\alpha} \right)^2 \\
   &\le  4\left(\frac{1}{m}\sum_{i=1}^m(a_i^\T HH^\T a_i)(a_i^\T HW^\T a_i)\rho_{i,\alpha}\right)^2+16\left(\frac{1}{m}\sum_{i=1}^m(a_i^\T H\Xb^\T a_i)(a_i^\T HW^\T a_i)\rho_{i,\alpha}\right)^2 \\
   &\quad +4\left(\frac{1}{m}\sum_{i=1}^m(a_i^\T HH^\T a_i)(a_i^\T \Xb W^\T a_i)\rho_{i,\alpha}\right)^2+16\left(\frac{1}{m}\sum_{i=1}^m(a_i^\T H\Xb^\T a_i)(a_i^\T \Xb W^\T a_i)\rho_{i,\alpha}\right)^2.
\end{aligned}
\end{equation*}
 We first consider the term $4\left(\frac{1}{m}\sum_{i=1}^m(a_i^\T HH^\T a_i)(a_i^\T HW^\T a_i)\rho_{i,\alpha}\right)^2$. Using Cauchy-Schwarz inequality, we obtain that
\begin{equation*}
\begin{aligned}
& 4\left(\frac{1}{m}\sum\limits_{i=1}^m(a_i^\T HH^\T a_i)(a_i^\T HW^\T a_i)\rho_{i,\alpha}\right)^2\\
   &\le 4\left(\frac{1}{m}\sum\limits_{i=1}^m(a_i^\T HH^\T a_i)^2\rho_{i,\alpha}\right)
    \left(\frac{1}{m}\sum\limits_{i=1}^m(a_i^\T HW^\T a_i)^2\rho_{i,\alpha}\right) \\
    &\le 4\left(\frac{1}{m}\sum\limits_{i=1}^m(a_i^\T HH^\T a_i)^2\rho_{i,\alpha}\right)\left(\frac{1}{m}\sum\limits_{i=1}^m(a_i^\T HH^\T a_i)(a_i^\T WW^\T a_i)\rho_{i,\alpha}\right).
\end{aligned}
\end{equation*}
According to Corollary \ref{ahhalow}, we have
\begin{equation}\label{eq:T_1}
  \frac{1}{m}\sum\limits_{i=1}^m(a_i^\T HH^\T a_i)^2\rho_{i,\alpha} \le\left(2\normf{HH^\T}^2+(2\delta_2+1)\normf{H}^4\right)
\end{equation}
with probability at least $1-C\exp(-\Omega(n))$ provided $m\ge c_0\delta_2^{-2}\sigma_r^{-2}nr\log(\sqrt{r}/(\delta_2\sigma_r))$.  Noting that $a_i^\T XX^\T a_i\ge \sigma_r\norms{a_i}^2$ and $a_i^\T HH^\T a_i\leq \|H\|_F^2 \|a_i\|^2$ we have
\begin{equation*}
  \frac{a_i^\T XX^\T a_i}{2.02\alpha}\ge \frac{\sigma_r \cdot a_i^\T HH^\T a_i}{2.02\alpha\normf{H}^2} \qquad \text{and}\qquad
  \frac{a_i^\T XX^\T a_i}{2.02\alpha}\ge \frac{\sigma_r \cdot a_i^\T WW^\T a_i}{2.02\alpha}.
\end{equation*}
It gives that
\begin{equation} \label{eq:T_2}
\begin{aligned}
   &(a_i^\T HH^\T a_i)(a_i^\T WW^\T a_i)\rho_{i,\alpha} \\
   &\le  (a_i^\T HH^\T a_i)(a_i^\T WW^\T a_i)\exp\left(-\frac{a_i^\T XX^\T a_i}{1.01\alpha}\right)\\
   &\le (a_i^\T HH^\T a_i)\exp\left(-\frac{\sigma_r \cdot a_i^\T HH^\T a_i}{2.02\alpha\normf{H}^2}\right)
   (a_i^\T WW^\T a_i)\exp\left(-\frac{\sigma_r \cdot a_i^\T WW^\T a_i}{2.02\alpha}\right) \\
   &\le \normf{H}^2 \Big(\frac{1.01\alpha}{e\sigma_r}\Big)^2
\end{aligned}
\end{equation}
where we use inequality $ xe^{-\gamma x}\le 1/(e\gamma)$ for any $x\ge 0$ in the last line. Combining formulas (\ref{eq:T_1}) and (\ref{eq:T_2}), we obtain
\begin{equation*}
  4\left(\frac{1}{m}\sum\limits_{i=1}^m(a_i^\T HH^\T a_i)(a_i^\T HW^\T a_i)\rho_{i,\alpha}\right)^2\le 4 \Big(\frac{1.01\alpha}{e\sigma_r}\Big)^2 \normf{H}^2 \left(2\normf{HH^\T}^2+(2\delta_2+1)\normf{H}^4\right).
\end{equation*}
The other three terms can be bounded similarly. For the second term, we have
\begin{equation*}
\begin{aligned}
   &16\left(\frac{1}{m}\sum_{i=1}^m(a_i^\T H\Xb^\T a_i)(a_i^\T HW^\T a_i)\rho_{i,\alpha}\right)^2 \\
   &\le 16\left(\frac{1}{m}\sum\limits_{i=1}^m(a_i^\T H\Xb^\T a_i)^2\rho_{i,\alpha}\right)\left(\frac{1}{m}\sum\limits_{i=1}^m(a_i^\T HW^\T a_i)^2\rho_{i,\alpha}\right) \\
   &\le 4 \Big(\frac{1.01\alpha}{e\sigma_r}\Big)^2 \normf{H}^2 \left(4(\sigma_1+2\delta_1)\normf{H}^2+4\tr^2(H^\T\Xb)+4\normf{H^\T\Xb}^2\right)
\end{aligned}
\end{equation*}
with probability at least $1-C\exp(-\Omega(n))$ provided $m\ge c_0\delta_1^{-2}nr\log(\sqrt{r}/\delta_1)$, where we use the part (b) of Lemma \ref{lemma one} in the last line. The third term and fourth term can be bounded as
\begin{equation*}
  4\left(\frac{1}{m}\sum_{i=1}^m(a_i^\T HH^\T a_i)(a_i^\T \Xb W^\T a_i)\rho_{i,\alpha}\right)^2 \le
  4\Big(\frac{1.01\alpha}{e\sigma_r}\Big)^2\normf{X}^2\left(2\normf{HH^\T}^2+(2\delta_2+1)\normf{H}^4\right)
\end{equation*}
\begin{equation*}
  16\left(\frac{1}{m}\sum_{i=1}^m(a_i^\T H\Xb^\T a_i)(a_i^\T \Xb W^\T a_i)\rho_{i,\alpha}\right)^2 \le
   4 \Big(\frac{1.01\alpha}{e\sigma_r}\Big)^2\normf{X}^2\left(4(\sigma_1+2\delta_1)\normf{H}^2+4\tr^2(H^\T\Xb)+4\normf{H^\T\Xb}^2\right).
\end{equation*}
Putting there inequalities together and noting that $\normf{HH^\T}\le \normf{H}^2$, we have
\[\normf{\nabla f_{\ex}(U)}^2 \le 4 \Big(\frac{1.01\alpha}{e\sigma_r}\Big)^2\left(\normf{H}^2+\normf{X}^2\right)
\left(\left(4\sigma_1+8\delta_1+(2\delta_2+3)\normf{H}^2\right)\normf{H}^2+4\tr^2(H^\T\Xb)+4\normf{H^\T\Xb}^2\right).
\]
Furthermore, noticing that $\normf{H}^2\le\frac{1}{8}\sigma_r$ and choosing $\delta_1\le \frac{1}{16}\sigma_r$, $\delta_2\le \frac{1}{16}$, it follows that
\begin{equation*}
\frac{\sigma_r^2\normf{\nabla f_{\ex}(U)}^2}{3\alpha^2\left(\normf{H}^2+\normf{X}^2\right)}
\le 1.223\sigma_1\normf{H}^2+\tr^2(H^\T\Xb)+\normf{H^\T\Xb}^2
\end{equation*}
with probability at least $1-C\exp(-\Omega(n)$, if $m\ge c_0\sigma_r^{-2}nr\log(c_1r/\sigma_r)$.
\end{proof}

The rest paper is to check the Claim \ref{claim1}. For 1) and 2) of the Claim \ref{claim1}, let $O_1=\argmin{O \in \mathcal{O}(r)}\normf{U-XO}$, then $\Xb=XO_1$. Recall that $X$ has orthogonal column vectors, and then there exists an orthogonal matrix $O_2 \in \R^{n\times n}$ such that $X=O_2(\norms{x_1}e_1,\ldots,\norms{x_r}e_r)$. Let $\Hh:=HO_1^\T, \; \Ht=O_2^\T\Hh $ and $\hh_s,\htt_s,x_s$ denote the $s$th column of $\Hh,\Ht,X$ respectively, and $a_{i,s}$ denotes the $s$th entry of $a_i$. It follows that
\begin{eqnarray}
   & & \E\left[(a_i^\T H\Xb^\T a_i)^2\right]=\E\left[(a_i^\T\Hh X^\T a_i)^2\right]=\E (a_i^\T O_2 \Ht X^\T O_2O_2^\T a_i) \nonumber \\
& \quad &=\E (a_i^\T  \Ht X^\T O_2 a_i)   = \E\left[\|x_1\|(\htt_1^\T a_i)a_{i,1}+\cdots+\|x_r\|(\htt_r^\T a_i)a_{i,r}\right]^2 \nonumber \\
   &\quad &= \E\left[\sum\limits_{s=1}^r\|x_s\|^2(\htt_s^\T a_i)^2a_{i,s}^2+\sum\limits_{s\neq k}\|x_s\|\|x_k\|(\htt_s^\T a_i)(\htt_k^\T a_i)a_{i,s}a_{i,k}\right] \nonumber\\
   &\quad &= \sum\limits_{s=1}^r\left(\norms{x_s}^2\norms{\htt_s}^2+2\norms{x_s}^2\htt_{s,s}^2\right)+\sum\limits_{s\neq k}\norms{x_s}\norms{x_k}\left(\htt_{s,s}\htt_{k,k}+\htt_{s,k}\htt_{k,s}\right)\label{firtm} \\
   &\quad &= \sum\limits_{s=1}^r\norms{x_s}^2\norms{\hh_s}^2+\sum\limits_{s,k}\norms{x_s}\norms{x_k}\left(\htt_s^\T e_s\htt_k^\T e_k+\htt_s^\T e_k\htt_k^\T e_s\right)\nonumber\\
   &\quad &= \sum\limits_{s=1}^r\norms{x_s}^2\norms{\hh_s}^2+\sum\limits_{s,k}(x_s^\T\hh_sx_k^\T\hh_k+x_s^\T\hh_kx_k^\T\hh_s)\label{The max sigular}  \\
   &\quad &\ge \sigma_r\normf{\Hh}^2+\tr^2(X^\T\Hh)+\tr(X^\T\Hh X^\T\Hh)  \nonumber \\
   &\quad&= \sigma_r\normf{H}^2+\tr^2(H^\T\Xb)+\tr(H^\T\Xb H^\T\Xb)   \nonumber \\
   &\quad &= \sigma_r\normf{H}^2+\tr^2(H^\T\Xb)+\normf{H^\T\Xb}^2,\nonumber
\end{eqnarray}
where the last equation follows from that $H^\T\Xb$ is a symmetric matrix and the symmetry of $HX^\T=(U-\Xb)X^\T$ can be seen by the singular-value decomposition of  $X^\T U$. More specifically, suppose that the singular-value decomposition of $X^\T U$ is $WDV^\T$, then we have
\begin{align*}
 O_1:=\argmin{O \in \mathcal{O}(r)}\normf{U-XO} & =\argmax{O \in \mathcal{O}(r)}\langle XO,U\rangle =\argmax{O \in \mathcal{O}(r)}\langle O,WDV^\T\rangle=WV^\T.
\end{align*}
Therefore, $U^\T\Xb=U^\T XWV^\T=VDV^\T$ is a symmetric matrix, which implies that $H^\T\Xb=U^\T\Xb-\Xb^\T\Xb$ is also symmetric matrix.\\

Similarly, from formula (\ref{The max sigular}), it is easy to obtain
\[\E\left[(a_i^\T H\Xb^\T a_i)^2\right]\le\sigma_1\normf{H}^2+\tr^2(H^\T\Xb)+\normf{H^\T\Xb}^2.\]

For 3) of the Claim \ref{claim1}, using the notation $\Hh,\Ht,\hh_s,\htt_s$ above, we have
\begin{eqnarray*}
 \E\left[(a_i^\T H\Xb^\T a_i)^2\barrho\right] &=& \E\left[(a_i^\T\Hh X^\T a_i)^2\barrho \right] \\
   &=& \E\left[\sum\limits_{s=1}^r\|x_s\|^2(\htt_s^\T a_i)^2a_{i,s}^2\cdot\prod\limits_{t=1}^r\exp\left(-\frac{\norms{x_t}^2a_{i,t}^2}{0.99\alpha}\right)\right]\\
   & &+\E\left[\sum\limits_{s\neq k}\|x_s\|\|x_k\|(\htt_s^\T a_i)(\htt_k^\T a_i)a_{i,s}a_{i,k}\cdot
    \prod\limits_{t=1}^r\exp\left(-\frac{\norms{x_t}^2a_{i,t}^2}{0.99\alpha}\right)\right] \\
   &>& 0.78\sum\limits_{s=1}^r\|x_s\|^2(2\htt_{s,s}^2+\norms{\htt_{s}}^2)+0.78\sum\limits_{s\neq k}\norms{x_s}\norms{x_k}(\htt_{s,s}\htt_{k,k}+\htt_{s,k}\htt_{k,s})  \\
   &=& 0.78\E\left[(a_i^\T H\Xb^\T a_i)^2\right],
\end{eqnarray*}
where the last equation follows from (\ref{firtm}) and the inequality comes from the following two inequalities (\ref{ineqality:1}) and (\ref{inequality:2}):
\begin{eqnarray}
   && \E\Big[(\htt_s^\T a_i)^2a_{i,s}^2\cdot\prod\limits_{t=1}^r\exp(-\frac{\norms{x_t}^2a_{i,t}^2}{0.99\alpha})\Big] \nonumber\\
  &= & \frac{1}{\gamma\omega_s}\Big(\frac{\htt_{s,1}^2}{\omega_1}+\cdots+\frac{\htt_{s,s-1}^2}{\omega_{s-1}}
  +\frac{3\htt_{s,s}^2}{\omega_s}+\frac{\htt_{s,s+1}^2}{\omega_{s+1}}+\cdots+\frac{\htt_{s,r}^2}{\omega_r}+\htt_{s,r+1}^2+\cdots+\htt_{s,n}^2\Big)\nonumber \\
  & \ge&\frac{1}{1.102^2\cdot\gamma}(\htt_{s,1}^2+\cdots+\htt_{s,s-1}^2+3\htt_{s,s}^2+\htt_{s,s+1}^2+\cdots+\htt_{s,n}^2)\nonumber\\
  & \ge&\frac{1}{1.102^2\cdot e^{1/0.99\alpha}}(2\htt_{s,s}^2+\norms{\htt_{s}}^2)\nonumber\\
  &>&0.78(2\htt_{s,s}^2+\norms{\htt_{s}}^2)\label{ineqality:1}
\end{eqnarray}
provided $\alpha \ge 20$ and the parameters $\omega_k,\gamma$ are defined as follows:
 $$\omega_k:=\frac{\|x_k\|^2}{0.495\alpha}+1\le 1.102,\;\forall\; 1\le k\le r$$
 and
 $$\gamma:=\sqrt{(\frac{\|x_1\|^2}{0.495\alpha}+1)(\frac{\|x_2\|^2}{0.495\alpha}+1)\cdots(\frac{\|x_r\|^2}{0.495\alpha}+1)}\le e^{1/0.99\alpha}$$
due to the fact that $1+x\le e^x$ for any $x\ge 0$ and $\normf{X}=1$. Similarly, for any $s\neq k,1\le s,k\le r$, we have
\begin{equation}
   \E\left[(\htt_s^\T a_i)(\htt_k^\T a_i)a_{i,s}a_{i,k}\cdot\prod\limits_{t=1}^r\exp(-\frac{\norms{x_t}^2a_{i,t}^2}{0.99\alpha})\right]= \frac{\htt_{s,s}\htt_{k,k}+\htt_{s,k}\htt_{k,s}}{\gamma\omega_s\omega_k}> 0.78(\htt_{s,s}\htt_{k,k}+\htt_{s,k}\htt_{k,s}).\label{inequality:2}
\end{equation}

\end{document}